\newtheorem{prop}{Proposition}
\newtheorem{remark}{Remark}
\crefname{equation}{}{}
\crefname{prop}{proposition}{propositions}
\crefname{remark}{remark}{remarks}
\newcommand{\mL}{m(\Lambda)} 
\newcommand{\norm}[1]{\vert #1 \vert }
\newcommand{\T}{{\sf T}}
\newcommand{\sfA}{{\sf A}}
\newcommand{\sfM}{{\sf M}}
\newcommand{\Df}{D\hspace{-1pt}f}
\newcommand{\R}{\mathbb{R}}
\newcommand{\cB}{\mathcal{B}}
\newcommand{\cC}{\mathcal{C}}
\newcommand{\cV}{\mathcal{V}}
\newcommand{\Rm}{\mathbb{R}^n} 
\newcommand{\Sn}{\mathbb{S}^{n-1}}
\newcommand{\Rx}{\mathbb{R}[x]}
\newcommand{\Rxz}{\mathbb{R}[x,z]} 
\newcommand{\B}{\mathcal{B}} 
\newcommand{\U}{B} 
\newcommand{\dt}{\mathrm{d}t}
\newcommand{\ddt}{\frac{\mathrm{d}}{\dt}}
\newcommand{\ov}{\overline}
\newcommand{\beq}{\begin{equation}}
\newcommand{\eeq}{\end{equation}}
\newcommand{\cG}{\mathcal{G}} 
\newcommand\solidrule[1][15pt]{\rule[0.5ex]{#1}{1pt}}
\newcommand\dashedrule{\mbox{%
		\solidrule[3pt]\hspace{3pt}\solidrule[3pt]\hspace{3pt}\solidrule[3pt]}}
\newcommand\shortdottedrule{\mbox{%
		$\cdot$\hspace{1.5pt}$\cdot$\hspace{1.5pt}$\cdot$\hspace{1.5pt}$\cdot$\hspace{1.5pt}$\cdot$}}
\title{Convex computation of maximal Lyapunov exponents}
\author{Hans Oeri, David Goluskin\thanks{Email: {\tt goluskin@uvic.ca}}}
\date{Department of Mathematics and Statistics, University of Victoria, Victoria, Canada}
\begin{document}

\maketitle

\begin{abstract}
We describe an approach for finding upper bounds on an ODE dynamical system's maximal Lyapunov exponent among all trajectories in a specified set. A minimization problem is formulated whose infimum is equal to the maximal Lyapunov exponent, provided that trajectories of interest remain in a compact set. The minimization is over auxiliary functions that are defined on the state space and subject to a pointwise inequality. In the polynomial case---i.e., when the ODE's right-hand side is polynomial, the set of interest can be specified by polynomial inequalities or equalities, and auxiliary functions are sought among polynomials---the minimization can be relaxed into a computationally tractable polynomial optimization problem subject to sum-of-squares constraints. Enlarging the spaces of polynomials over which auxiliary functions are sought yields optimization problems of increasing computational cost whose infima converge from above to the maximal Lyapunov exponent, at least when the set of interest is compact. For illustration, we carry out such polynomial optimization computations for two chaotic examples: the Lorenz system and the H\'enon--Heiles system. The computed upper bounds converge as polynomial degrees are raised, and in each example we obtain a bound that is sharp to at least five digits. This sharpness is confirmed by finding trajectories whose leading Lyapunov exponents approximately equal the upper bounds.
\end{abstract}

\section{Introduction}
\label{sec: intro}

In a dynamical system, the rates at which infinitesimally nearby trajectories converge or diverge are captured by Lyapunov exponents (LEs)~\cite{Pikovsky2016}. These diagnostic exponents are especially prominent in the study of chaotic systems, where a positive exponent corresponds to exponentially diverging trajectories and, therefore, to sensitive dependence on initial conditions. For trajectories that are periodic orbits, LEs coincide with Floquet multipliers. If periodic orbits are linearly unstable and embedded in a chaotic attractor, their positive multipliers are one measure of their relative importance to the chaotic dynamics~\cite{Cvitanovic2016}. However, the instability of trajectories that positive LEs indicate also makes it difficult to compute such LEs precisely using numerical integration~\cite{Dieci1997, Eckmann1985, Geist1990}. The present work is concerned with LEs that are maximal among all trajectories in a chosen set. Rather than using numerical integration, we formulate convex optimization problems that yield convergent upper bounds on maximal LEs.

Consider dynamics of $x(t)$ in $\R^n$ governed by an autonomous ODE system,
\beq
\label{eq: dynamical system}
\ddt x(t) = f(x(t)) , \qquad x(0) = x_0.
\eeq
We ensure the system is well-posed by assuming $f\in\cC^1(\Rm,\Rm)$, where $\cC^k(\Rm,\Rm)$ denotes the space of $k$-times continuously differentiable functions mapping $\Rm$ to $\Rm$, and similarly $\cC^k(\Rm)$ denotes functions mapping $\Rm$ to $\R$. At each point along $x(t)$, the linearization of~\cref{eq: dynamical system} defines dynamics of $y(t)$ evolving in the tangent space $\R^n$ according to
\beq
\label{eq: linearized system}
\ddt y(t)=\Df(x(t))\,y(t), \qquad y(0)=y_0,
\eeq
where $\Df(x)$ is the $n\times n$ Jacobian matrix for $f(x)$. The perturbation vector from a trajectory $x(t)$ to a very nearby trajectory evolves proportionally to $y(t)$, so long as the trajectories remain close enough for the linearization \cref{eq: linearized system} to be accurate. If two trajectories separate on average and evolve nonlinearly, eventually \cref{eq: linearized system} will not describe the vector between them. Thus~\cref{eq: linearized system} may not approximate nearby trajectories uniformly in time, but it captures, at each instant~$t$, the dynamics of trajectories that are infinitesimally near~$x(t)$. Since \cref{eq: linearized system} is linear in $y$, the dynamics of $y$ are independent of its magnitude. Thus we fix $|y_0|=1$, so $y(t)$ initially lies on the sphere $\Sn\subset \Rm$ but can grow or shrink in time. 

For each initial condition $x_0\in \Rm$ and initial direction $y_0\in\Sn$ in the tangent space, define the resulting LE as
\beq
\label{eq: LE def}
\mu(x_0,y_0) = \limsup_{t \rightarrow \infty} \frac{1}{t} \log \norm{y(t)}.
\eeq
Vectors $y(t)$ evolving in the tangent space of the trajectory $x(t)$ shrink or grow like $e^{\mu t}$, on average. Under our assumption that $f\in\cC^1(\R^n,\R^n)$, if $x(t)$ remains bounded then $y(t)$ cannot shrink or grow faster than exponentially, so $\mu(x_0,y_0)$ cannot be $\pm \infty$.

For the trajectory emanating from each $x_0$, the possible values of the limit~\cref{eq: LE def} among all initial directions $y_0$ define an associated spectrum of LEs, $\mu_1(x_0)\ge\mu_2(x_0)\ge\cdots$. There are at most $n$ distinct LEs in the spectrum~\cite{Meiss2017}. The Oseledets ergodic theorem~\cite{Oseledets1968, Pikovsky2016} implies that the leading exponent $\mu_1(x_0)$ is attained for almost every initial direction $y_0$, so often the leading LE is of primary interest, and many authors refer to it as ``the'' LE.

Here we pursue upper bounds that apply to all trajectories starting within any chosen set $\Omega\subset \Rm$. In other words, these upper bounds apply to the \emph{maximal Lyapunov exponent} $\mu^*_\Omega$ defined by
\beq
\label{maximal LE}
\mu_{\Omega}^* = \sup_{\substack{x_0 \in \Omega~~\\~\;y_0 \in \mathbb{S}^{n-1}}} \mu(x_0,y_0).
\eeq
We assume all $x_0\in\Omega$ lead to bounded trajectories; if not, the supremum in \cref{maximal LE} can be restricted to such $x_0$. A bounded trajectory on which a positive $\mu^*_\Omega$ value is attained is, by definition, the most unstable trajectory starting in $\Omega$. The methods we describe for upper-bounding the supremum in \cref{maximal LE} could analogously lower-bound the infimum over $(x_0,y_0)$, meaning they would give lower bounds on the smallest LE $\mu_n$ among trajectories starting in $\Omega$, but here we speak only in terms of upper bounds on the maximal LE.

The difficulty in computing $\mu^*_\Omega$ by numerical integration is twofold. First, for a single $x_0$ leading to positive $\mu_1$, the value of $\mu_1$ is hard to compute accurately because the exponential separation of trajectories magnifies numerical errors as time goes forward. In chaotic systems, the time average $\tfrac1t\log|y(t)|$ often converges extremely slowly with increasing $t$, if at all~\cite{Parker2012, Benettin1980, Hubertus1997, Tancredi2001, Meiss2017}. Second, even if $\mu_1$ can be computed accurately for single trajectories, there may be an infinite number of trajectories with different $\mu_1$ values. This is expected of the unstable periodic orbits embedded in a chaotic attractor, for instance. Searching over such trajectories for the largest $\mu_1$ value is not a convex search, so even if one finds the trajectory that attains the maximal LE, the existence of another trajectory with larger $\mu_1$ cannot be ruled out using numerical integration. These difficulties motivate our current approach, which may complement the search for maximal LEs by numerical integration but is fundamentally different.

Often one is most interested in $\mu_1$ of a chaotic attractor, which will be the $\mu_1$ value of almost every trajectory in its basin of attraction. However, this $\mu_1$ value is usually strictly smaller than the maximal LE $\mu^*_\Omega$ for any $\Omega$ containing the attractor, with the maximal LE being attained only on a particular unstable orbit. The upper bounds we present here will converge to $\mu^*_\Omega$ from above and so would be strictly larger than $\mu_1$ on the attractor, nonetheless they will be better upper bounds than any existing method can give in many cases.

Our approach to bounding the maximal LE $\mu^*_\Omega$ from above is based on two ingredients that are explained in the next section. The first ingredient is a simple formula that expresses $\mu(x_0,y_0)$ as an infinite-time-averaged integral along trajectories $(x(t),y(t)/|y(t)|)$ in the state space $\Rm\times\Sn$. Various versions of this formula have appeared in the dynamical systems literature, including the Furstenberg--Khasminskii formula for stochastic dynamics~\cite{Arnold1995, Bedrossian2022}. Because $\mu^*_\Omega$ can be expressed as a time average that is maximized among certain trajectories, our second ingredient is a method for computing arbitrarily sharp upper bounds on such time averages. In the important case where the right-hand side of an ODE system is polynomial in the state space variables---and in some generalizations of this case---polynomial optimization recently has been used to compute arbitrarily sharp upper bounds on maximal time averages of various quantities~\cite{Chernyshenko2014, Goluskin2018a, Lakshmi2020, Olson2020, Korda2021, Doering2022a}. This approach yields upper bounds as numerically computed minima of sum-of-squares (SOS) programs---convex optimization problems in which the coefficients of polynomial expressions are constrained by requiring that the polynomials admit representations as sums of squares of other polynomials. A polynomial being SOS is a sufficient condition for pointwise nonnegativity that is stronger in general but also is more tractable to enforce computationally.

Starting concurrently with the advent of SOS programming two decades ago~\cite{Nesterov2000, Parrilo2000, Lasserre2001}, methods have been developed to study dynamical systems by solving SOS programs. These methods all involve constructing scalar functions on a dynamical system's state space and requiring that the functions obey pointwise inequalities. The inequalities, which can be enforced by SOS constraints, imply mathematical statements about the dynamics without the need to compute any trajectories. In addition to the method for bounding time averages that we employ here, SOS computations have been used to verify nonlinear stability~\cite{Parrilo2000, Anderson2015, Hafstein2018, Fuentes2022, Kuntz2016}, show that trajectories do or do not enter specified sets~\cite{Henrion2014, Bramburger2020, Parker2021, Cibulka2022a}, bound expectations in stochastic dynamical systems~\cite{Kuntz2016, Fantuzzi2016}, bound extrema over certain trajectories~\cite{Fantuzzi2020, Miller2021a} or over global attractors~\cite{Goluskin2020a}, and more. We note that all of these methods can be formulated in terms of occupation measures, which are certain measures induced by the dynamics. When an optimization over measures is truncated to a finite number of moments of the measure, the resulting problem is related to an SOS optimization problem by convex duality. For the methods presented below we speak in terms of SOS optimization, but there is a dual description in terms of measures and moments; see~\cite{Henrion2021} for an introduction to occupation measures and the relationship between moments and SOS polynomials.

\Cref{sec: continuous} recalls how SOS computations can bound time averages for ODE systems and then applies this approach to bounding LEs. Section \ref{sec: examples} presents computational results showing the convergence of upper bounds to the maximal LE in two chaotic examples: the Lorenz system and the H\'enon--Heiles system. \Cref{sec: con} offers conclusions and future extensions.

\section{Optimization problems for bounding maximal LEs}
\label{sec: continuous}

\Cref{sec: SOS time avg} explains how maximal time averages among ODE trajectories can be bounded above using convex optimization, including how the optimization problems can be relaxed into SOS programs. \Cref{sec: SOS LE} specializes the approach of \cref{sec: SOS time avg} to the task of bounding a maximal LE from above, yielding computationally useful SOS programs along with theoretical sharpness guarantees. \Cref{sec: sym} explains how symmetries in the dynamical system allow symmetries to be imposed in the SOS programs for bounding maximal LEs, which improves computational cost and numerical accuracy. The computational methods of \cref{sec: SOS LE}, including the symmetry restrictions of \cref{sec: sym}, are implemented to bound LEs in the examples of \cref{sec: examples}.

\subsection{Bounding time averages using convex optimization}
\label{sec: SOS time avg}

Consider any continuous function $\Phi\in\cC(\Rm)$ taking values in $\R$, along with its infinite-time average along a trajectory $x(t)$ of the ODE~\cref{eq: dynamical system},
\begin{align}
\label{time-average}
\overline \Phi(x_0) = \limsup_{T \rightarrow \infty} \frac{1}{T} \int_0^T \Phi(x(t)) \, \dt.
\end{align}
We define time averages with $\limsup$ to ensure they exist. (The bounds obtained would be unchanged if $\ov\Phi$ were defined using $\liminf$.) If the trajectory emanating from $x_0$ is bounded forward in time, then so is $\Phi(x(t))$, and thus $\ov\Phi$ is finite.

The strategy for bounding $\ov\Phi$ without knowing any particular trajectories is to introduce a differentiable \emph{auxiliary function} $V\in\cC^1(\Rm)$. For any such $V$, the continuous function $f\cdot\nabla V$, which also maps $\Rm$ to $\R$, has a vanishing infinite-time average along every bounded trajectory. This is true because $\ddt V(x(t))=f(x(t))\cdot \nabla V(x(t))$ along trajectories; in other words, $f\cdot\nabla V$ is the Lie derivative of $V$ along ODE solutions. Since $V(x(t))$ is uniformly bounded forward in time along each bounded trajectory, $\ov{f\cdot\nabla V}=\limsup_{T\to\infty}\frac{1}{T}[V(x(T))-V(x_0)]=0$. For each differentiable $V$ and each set $\cB\subset\Rm$ in which $x(t)$ remains for $t\ge0$, 
\begin{align}
\label{eq: first UB}
\ov\Phi = \ov{\Phi +f\cdot \nabla V} \leq \sup_{x \in \cB} \left [ \Phi(x) + f(x)\cdot\nabla V(x) \right ].
\end{align}
If nothing is known about trajectories' locations aside from each trajectory being bounded, one can always choose $\cB=\Rm$ in \cref{eq: first UB}. For a poor choice of $V$ the upper bound in \cref{eq: first UB} will be conservative---possibly even infinite if $\cB$ is unbounded---but one can seek $V$ that make the right-hand side as small as possible. This optimization of $V$ is described in general in \cref{eq: time avg bounds}, followed by its SOS relaxation in the polynomial case in \cref{sec: time avg bounds SOS}.

\subsubsection{Optimization over auxiliary functions}
\label{eq: time avg bounds}

The upper bound \cref{eq: first UB} holds for each bounded $x(t)$ and differentiable $V$, so one can maximize the left-hand side over initial conditions $x_0$ and minimize the right-hand side over~$V$. Let $\cB_0$ be a set of initial conditions whose trajectories remain bounded, uniformly in $t\ge0$ but not necessarily uniformly in $x_0$. Suppose all of these trajectories eventually remain in a set $\cB$, which may be unbounded. Then \cref{eq: first UB} can be minimized over $x_0\in\cB_0$ and maximized over $V\in\cC^1(\cB)$ to obtain
\beq
\label{eq: weak duality}
\sup_{x_0 \in \cB_0} \overline \Phi \leq \inf_{V \in \mathcal{C}^1(\cB)} \sup_{x \in \cB} \left[ \Phi(x) + f(x)\cdot\nabla V(x) \right].
\eeq
Here $\cC^1(\cB)$ is defined as the restriction of $\cC^1(\Rm)$ functions to a domain $\cB$ (possibly of lower dimension), which is equivalent to the set of functions mapping $\cB$ to $\R$ that admit a differentiable extension to an open neighborhood of $\cB$. In \cref{eq: weak duality} one can always choose $\cB=\Rm$ but often it is possible to choose smaller $\cB$, and this may decrease the upper bound. If $\cB_0$ lies in the basin of a local attractor, for instance, one can choose any $\cB$ that contains the attractor. If $\cB_0$ is forward invariant, meaning no trajectories leave the set forward in time, then one can choose $\cB=\cB_0$. 

When $\cB$ is forward invariant and also compact, and one chooses $\cB_0=\cB$, then \cref{eq: weak duality} is an equality~\cite{Tobasco2018},
\beq
\label{eq: strong duality}
\sup_{x_0 \in \cB} \overline \Phi = \inf_{V \in \mathcal{C}^1(\cB)} \sup_{x \in \cB} \left[ \Phi(x) + f(x)\cdot\nabla V(x) \right].
\eeq
The equality \cref{eq: strong duality} can be seen as a statement of strong convex duality: the right-hand side can be restated with the inner maximization over invariant measures, and the left-hand side can be stated in the same way with the order of minimization and maximization reversed. For a sketch of the proof of~\cref{eq: strong duality}, see expression (17) in~\cite{Tobasco2018}. The duality result~\cref{eq: strong duality} has analogous precedents in the context of discrete-time dynamics, for which we refer to the discussion and references in~\cite{Bochi2018, Jenkinson2019}. In~\cite{Tobasco2018} it is assumed that $\cB$ is a full-dimensional region, but their proof works more generally for any set $\cB$ that is compact and forward invariant~\cite{TobascoPC}. This generality is needed in what follows, where we apply \cref{eq: strong duality} to sets defined by Cartesian products involving the sphere $\Sn$.

Although the infimum on the right-hand sides of \cref{eq: weak duality,eq: strong duality} may not be attained, there exist $V$ giving arbitrarily sharp upper bounds on the left-hand supremum. However, it can be difficult to evaluate the inner supremum on the right-hand side for a given $V$, and optimizing over $V$ is still harder. An important exception, in which computationally tractable relaxations of the right-hand minimax problem are possible, is the case where the ODE right-hand side $f(x)$ and the quantity $\Phi(x)$ are polynomials.

\subsubsection{SOS relaxations}
\label{sec: time avg bounds SOS}

The minimax problem on the right-hand sides of \cref{eq: weak duality,eq: strong duality} can be rewritten as a minimization subject to a pointwise inequality constraint,
\beq
\label{eq: weak duality 2}
\inf_{V \in \mathcal{C}^1(\cB)} \sup_{x \in \cB} \left[ \Phi(x) + f(x)\cdot\nabla V(x) \right] 
= \inf_{V \in\cC^1(\cB)} \U \quad \text{s.t.}  \quad S(x) \geq 0~~\forall x\in\cB,
\eeq
where
\beq
\label{eq: S}
S(x) = \U - \Phi(x) - f(x)\cdot\nabla V(x).
\eeq
In the case where the given $f$ and $\Phi$ are polynomial in the components of $x$, if $V$ is chosen to be polynomial also, then so is $S$. Even for polynomial $S$ the right-hand minimization in \cref{eq: weak duality 2} is often intractable; deciding nonnegativity of a multivariate polynomial on a given set has NP-hard computational complexity in general~\cite{Murty1987}. However, there is a standard way to enforce the nonnegativity constraint on $S$ using SOS conditions that are more tractable.

The simplest way to enforce nonnegativity of $S$ using SOS constraints is to require that $S$ admits an SOS representation, meaning that there exist a finite number of polynomials $q_k(x)$ such that $S=\sum_{k=1}^Kq_k^2$. In other words, $S$ belongs to the set $\Sigma_n$ of $n$-variate SOS polynomials, which is a strict subset (when $n\ge2$) of the set of globally nonnegative polynomials. In the corresponding SOS relaxation of the right-hand side of \cref{eq: weak duality 2}, the set $\cC^1(\cB)$ is replaced by its subset $\R[x]$ of real polynomials in $x$, and the constraint is strengthened to $S\in\Sigma_n$. In some applications~\cite{Goluskin2018a, Goluskin2019, Lakshmi2020, Olson2020} this simple SOS relaxation has given an upper bound not only equal to the left-hand side of \cref{eq: weak duality 2} but also to the maximal time average $\sup_{x_0\in\cB_0}\ov\Phi$ that one seeks to bound. In general, however, enforcing $S\in\Sigma_n$ can strictly increase the right-hand infimum in \cref{eq: weak duality 2}. This would be the case in our application to LEs below, where nonnegativity of a polynomial must be enforced only when one of its vector arguments has unit length. Thus a constraint weaker than $S\in\Sigma_n$ is needed.

When $\cB\subsetneq\R^n$ there is a standard way to formulate SOS constraints that imply nonnegativity on $\cB$ but not on all of $\Rm$, assuming that $\cB$ is a semialgebraic set. That is, we assume $\cB$ can be specified by a finite number of polynomial inequalities and/or equalities:
\beq
\label{eq: semialgebraic set}
\B = \left\{x\in \Rm~:~g_i(x) \ge 0~\text{for }i=1,\ldots,I,~\;h_j(x) = 0~\text{for }j=1,\ldots,J \right\}.
\eeq
A sufficient condition for $S$ to be nonnegative on $\cB$ is that there exist polynomials $\rho_j\in\Rx$ and SOS polynomials $\sigma_i\in\Sigma_n$ such that
\beq
\label{eq: SOS semialgebraic}
\begin{aligned}
S - \textstyle\sum_{i=1}^I\sigma_ig_i - \sum_{j=1}^J\rho_jh_j 
&\in\Sigma_n,\\[-4pt]
\sigma_i&\in\Sigma_n, \quad i=1,\ldots,I.
\end{aligned}
\eeq
When $x\in\cB$, the first sum in \cref{eq: SOS semialgebraic} is nonnegative and the second vanishes, so $S(x)\ge0$. When $x\notin\cB$, either sign of $S$ is possible. Relaxing the right-hand minimization in \cref{eq: weak duality 2} by restricting to $V\in\R[x]$ and strengthening its constraint to \cref{eq: SOS semialgebraic} gives a possibly larger infimum,
\begin{multline}
\label{eq: weak duality 3}
\inf_{V \in \mathcal{C}^1(\cB)} \sup_{x \in \cB} \left[ \Phi(x) + f(x)\cdot\nabla V(x) \right] \\[-10pt]
\le \inf_{\substack{V,\sigma_i,\rho_j \in \Rx }} \U \quad
\text{s.t.} \quad 
\begin{array}{rl} \\
 S - \sum_{i=1}^I\sigma_ig_i  - \sum_{j=1}^J\rho_jh_j  \in \Sigma_n~ \\
\sigma_i \in \Sigma_n, & i=1,\ldots,I.
\end{array}
\end{multline}

It is proved in~\cite{Lakshmi2020} that \cref{eq: weak duality 3} is an equality if the semialgebraic specification \cref{eq: semialgebraic set} of $\cB$ satisfies a technical condition called the Archimedean property \cite[definition 3.4]{Lasserre2008}. Every set with an Archimedean specification is compact, and every compact semialgebraic set’s specification can be made Archimedean, if it isn’t already, by adding a constraint $g_i(x)=R^2-|x|^2\ge0$ with $R$ large enough so that the constraint is redundant.

If a compact set $\cB$ is specified in an Archimedean way so that \cref{eq: weak duality 3} is an equality, and if $\cB$ is forward invariant so that the equality \cref{eq: strong duality} holds, then the maximal time average we seek to bound is equal to the infimum on the right-hand side of~\cref{eq: weak duality 3} \cite[theorem 1]{Lakshmi2020}:
\beq
\label{eq:constrained SOS program}
\sup_{x_0 \in \cB} \overline \Phi = \inf_{\substack{V,\sigma_i,\rho_j \in \Rx }} \U \quad
\text{s.t.} \quad \begin{array}{rl} \\
 S - \sum_{i=1}^I\sigma_ig_i  - \sum_{j=1}^J\rho_jh_j  \in \Sigma_n~ \\
\sigma_i \in \Sigma_n, & i=1,\ldots,I.
\end{array}
\eeq
Theorem 4 in~\cite{Korda2021} implies an equality similar to~\cref{eq:constrained SOS program} but where the infimum subject to SOS constraints is replaced by its convex dual -- a maximization subject to constraints on moments of invariant measures.

One more relaxation of the right-hand minimization in \cref{eq:constrained SOS program} is needed to obtain a computationally tractable problem: one chooses finite polynomial spaces from which to seek $V$ and each $\sigma_i$ and $\rho_j$. This yields an SOS program---a convex optimization problem that has SOS constraints and whose tunable parameters appear only affinely in these constraints and in the optimization objective~\cite{Parrilo2013a}. (Note that the tunable parameters in \cref{eq:constrained SOS program} are $\U$ and the coefficients of $V$, which appear affinely in $S$, along with the coefficients of each $\sigma_i$ and $\rho_j$.) The resulting SOS program will be computationally tractable when the dimensions of the ODE state space and of the vector spaces of tunable polynomials are not too large. A sequence of SOS programs with increasing computational cost can be defined by choosing successively higher-dimensional spaces over which to optimize the tunable polynomials. The infima of these SOS programs converge from above to the right-hand side of \cref{eq:constrained SOS program}. In cases where the equality \cref{eq:constrained SOS program} holds, this means that SOS programs can give arbitrarily sharp upper bounds on the left-hand maximal time average.

\subsection{Upper bounds on maximal LEs}
\label{sec: SOS LE}

The framework of \cref{sec: SOS time avg} gives bounds on a time-integrated average $\ov\Phi$ over bounded trajectories, where $\Phi$ is an explicit function of the state space variable. We can apply this framework to the bounding of LEs \cref{eq: LE def} by expressing the LE $\mu$ as the time-integrated average of a particular $\Phi$. This is not possible with $\Phi$ depending only on the state space vector $x$ of the ODE system \cref{eq: dynamical system}, but it is straightforward if $\Phi$ is a function on the enlarged state space $(x,y)$ since the definition \cref{eq: LE def} of $\mu$ can be rewritten as
\beq
\label{FTC non normalized}
\mu(x_0,y_0)  
= \limsup_{T \rightarrow \infty} \frac{1}{T}  \int_0^T  \frac{\rm d}{{\rm d}t} \log{\norm{y}} \; {\rm d}t 
 = \ov{y\cdot\tfrac{{\rm d}y}{{\rm d}t}/\norm{y}^2}
 = \ov{y^\T \Df \bigl ( x \bigr)  y /\norm{y}^2}.
\eeq
The first equality follows from the definition \cref{eq: LE def} by the fundamental theorem of calculus, relying on the absolute continuity of $t\mapsto \log\norm{y(t)}$. The third equality follows from the $y$ ODE \cref{eq: linearized system}.

In cases where $\mu(x_0,y_0)$ is positive, $y(t)$ is unbounded despite $x(t)$ being bounded, and so the framework of \cref{sec: SOS time avg} does not apply to trajectories in $(x,y)$ state space. We therefore project the tangent dynamics \cref{eq: linearized system} onto $\Sn$ by letting $z(t)=y(t)/|y(t)|$. Then $\mu$ can be written as a time average over a bounded trajectory in $(x,z)$ state space as
\begin{align}
\label{integral LE z}
\mu(x_0,z_0) & = \ov{z^\T \Df(x)z}. 
\end{align}
Versions of the above formula have appeared in the literature for decades, including with modifications for stochastic dynamics as the Furstenberg--Khasminskii formula~\cite{Arnold1995, Bedrossian2022}.

\subsubsection{Bounds from minimization over auxiliary functions}
\label{sec: LE variational}

To bound the time average on the right-hand side of \cref{integral LE z} using the framework of \cref{sec: SOS time avg}, we need evolution equations for the $(x,z)$ state space. The ODE for $z$ is found by taking the time derivative of the definition $z(t)=y(t)/|y(t)|$ and applying the $y$ ODE~\cref{eq: linearized system}. Coupling this with the $x$ ODE \cref{eq: dynamical system} gives
\begin{align}
\label{eq: xz ODE}  
\ddt \begin{bmatrix} x \\ z \end{bmatrix} 
= \begin{bmatrix} f(x) \\ \ell(x,z) \end{bmatrix}
,\qquad (x,z) \in \Rm\times\Sn,
\end{align}
where
\beq
\label{eq: ell}
\ell(x,z) = \Df(x) z - [z^\T \Df(x) z] z.
\eeq
Recall that the framework of \cref{sec: SOS time avg} was described for an ODE with state space $\Rm$, right-hand side $f(x)$, and initial set $\cB_0$ whose trajectories eventually remain in $\cB$. To bound the maximal LE of the $x$ ODE, we apply the same framework to an ODE with state space $\Rm\times\Sn$, right-hand side as in \cref{eq: xz ODE}, and initial set $\cB_0\times\Sn$ whose trajectories eventually remain in $\cB\times\Sn$. We choose $\Phi(x,z)=z^\T \Df(x)z$, so that $\mu=\ov\Phi$. Then the left-hand side of \cref{eq: weak duality} becomes the maximal LE $\mu^*_{\cB_0}$ defined in \cref{maximal LE}, and the right-hand side of \cref{eq: weak duality} becomes the upper bound that we seek. This upper bound on $\mu^*_{\cB_0}$ is stated in the first part of \cref{prop: LE duality} below, which requires no further proof. When $\cB$ is compact and forward invariant for the $x$ ODE, so that $\cB\times\Sn$ is compact and forward invariant for \cref{eq: xz ODE}, the equality \cref{eq: strong duality} proved in~\cite{Tobasco2018} can be applied in the present context to give the second part of the proposition. The assumption $f\in\cC^2$ may be stronger than needed but suffices to ensure $(f,\ell)\in\cC^1$, in which case the result of~\cite{Tobasco2018} is directly applicable to the ODE with right-hand side $(f,\ell)$. 

\begin{prop}
\label{prop: LE duality}
Let $\ddt x(t)=f(x(t))$.
\begin{enumerate}
\item Let $f\in\cC^1(\Rm,\Rm)$, and let $\cB_0\subset \Rm $ and $\cB\subset \Rm$ be such that each trajectory $x(t)$ with $x_0\in\cB_0$ is bounded for $t\ge0$ and eventually remains in $\cB$. Then the maximal LE $\mu^*_{\cB_0}$ among trajectories with initial conditions in $\cB_0$ is bounded above by
\beq
\label{eq: LE weak duality}
\mu_{\cB_0}^* \leq  \inf_{V \in \mathcal{C}^1(\cB \times \mathbb{S}^{n-1})}  
\sup_{\substack{x\in\cB~~\\ ~\;z \in \mathbb{S}^{n-1}}}
\left( z^\T \Df z + f\cdot \nabla_x V + [\Df z  - (z^\T \Df z) z]\cdot \nabla_z V \right).
\eeq
\item Let $f\in\cC^2(\Rm,\Rm)$, and let $\cB\subset\R^n$ be compact and forward invariant. Then,
\beq
\label{eq: LE strong duality}
\mu_{\cB}^* =  \inf_{V \in \mathcal{C}^1(\cB \times \mathbb{S}^{n-1})}  
\sup_{\substack{x\in\cB~~\\ ~\;z \in \mathbb{S}^{n-1}}}
\left( z^\T \Df z + f\cdot \nabla_x V + [\Df z  - (z^\T \Df z) z]\cdot \nabla_z V \right).
\eeq
\end{enumerate}
\end{prop}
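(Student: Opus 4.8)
The plan is to recognize both parts as direct instances of the time-average duality of \cref{sec: SOS time avg}, applied not to the original $x$ ODE but to the lifted dynamics \cref{eq: xz ODE} on $\Rm\times\Sn$. Taking $\Phi(x,z)=z^\T\Df(x)z$, which is continuous because $f\in\cC^1$ makes $\Df$ continuous, the formula \cref{integral LE z} already identifies $\mu(x_0,z_0)=\ov\Phi$. Hence the supremum of $\mu$ over $x_0\in\cB_0$ and $z_0\in\Sn$ is exactly the left-hand side of \cref{eq: weak duality} for the enlarged system. The gradient term on the right-hand side of that framework becomes $(f,\ell)\cdot\nabla_{(x,z)}V=f\cdot\nabla_xV+\ell\cdot\nabla_zV$, and substituting $\ell$ from \cref{eq: ell} reproduces the bracketed integrand in \cref{eq: LE weak duality,eq: LE strong duality}. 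So the whole argument reduces to checking that the standing hypotheses of \cref{eq: weak duality,eq: strong duality} transfer to the lifted system.

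First I would verify that $\Sn$ is invariant under the $z$-dynamics, which is what makes the lift behave correctly: computing $\tfrac{1}{2}\ddt|z|^2=z\cdot\ell=z^\T\Df z-(z^\T\Df z)|z|^2$ shows this vanishes whenever $|z|=1$, so trajectories starting on $\Sn$ stay there. Consequently, for Part 1, every trajectory of \cref{eq: xz ODE} with $x_0\in\cB_0$ and $z_0\in\Sn$ has its $z$-component confined to the bounded sphere and, by hypothesis, its $x$-component bounded and eventually inside $\cB$; thus the full trajectory is bounded and eventually enters $\cB\times\Sn$. This is precisely the assumption under which \cref{eq: weak duality} applies, with initial set $\cB_0\times\Sn$, eventual set $\cB\times\Sn$, and $V\in\cC^1(\cB\times\Sn)$. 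Applying \cref{eq: weak duality} then yields \cref{eq: LE weak duality} with no further work, as the proposition's text already notes.

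For Part 2, the extra task is to certify the equality \cref{eq: strong duality} for the lifted system, which requires that $\cB\times\Sn$ be compact and forward invariant and that the lifted vector field be $\cC^1$. Compactness is immediate from $\cB$ and $\Sn$ both being compact; forward invariance follows from $\cB$ being forward invariant for the $x$ ODE together with the sphere-invariance just established. The regularity $(f,\ell)\in\cC^1$ is where the strengthened hypothesis $f\in\cC^2$ enters: since $\ell$ in \cref{eq: ell} is built from $\Df$, one needs $\Df\in\cC^1$, i.e.\ $f\in\cC^2$, to place the lifted field in the class for which \cref{eq: strong duality} was proved. Invoking \cref{eq: strong duality} on $\cB\times\Sn$ with $\cB_0=\cB$ then delivers \cref{eq: LE strong duality}.

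The one subtlety worth flagging is that $\cB\times\Sn$ is not a full-dimensional subset of $\Rm\times\Rm$, so \cref{eq: strong duality} is being used on a lower-dimensional compact invariant set. This is exactly the generality noted in the discussion following \cref{eq: strong duality}: although the original statement in \cite{Tobasco2018} assumes a full-dimensional region, its proof---recast as a minimax over invariant measures---extends to any compact forward-invariant set. Confirming that this duality argument goes through on the product-with-sphere geometry is the only step that is not purely mechanical, and it is precisely what the cited generalization supplies.
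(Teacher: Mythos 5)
Your proposal is correct and follows essentially the same route as the paper: both parts are obtained by applying the time-average framework of \cref{sec: SOS time avg} (weak duality \cref{eq: weak duality} for part 1, the equality \cref{eq: strong duality} of~\cite{Tobasco2018} for part 2) to the lifted system \cref{eq: xz ODE} on $\cB\times\Sn$ with $\Phi=z^\T\Df z$, with $f\in\cC^2$ ensuring $(f,\ell)\in\cC^1$ and the lower-dimensional compact invariant set handled by the generalization noted after \cref{eq: strong duality}. Your explicit check that $\Sn$ is invariant under the $z$-dynamics is a small addition the paper leaves implicit, but it does not change the argument.
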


In the proposition, $\nabla_xV$ and $\nabla_zV$ denote gradients of the auxiliary function $V(x,z)$ with respect to $x$ and $z$, respectively. In the first part of the proposition, neither $\cB_0$ nor $\cB$ is necessarily bounded---for instance, \cref{eq: LE weak duality} holds with $\cB_0=\cB=\Rm$ if each trajectory $x(t)$ is bounded forward in time. Note that the right-hand sides of \cref{eq: LE weak duality,eq: LE strong duality} are identical, but \cref{eq: LE strong duality} requires stronger assumptions on $\cB$ along with choosing $\cB_0=\cB$ on the left-hand side. A result analogous to \cref{eq: LE strong duality} in the context of linear systems with stochastic forcing is proved in~\cite{Kuntz2016}.

\subsubsection{Bounds from SOS relaxations}
\label{sec: LE SOS}

If the $x$ ODE \cref{eq: dynamical system} is polynomial, then so are the $(x,z)$ ODE \cref{eq: xz ODE} and the quantity $\Phi=z^\T \Df(x)z$, and we let $V(x,z)$ be polynomial also. The inner supremum on the right-hand sides of \cref{eq: LE weak duality,eq: LE strong duality} is bounded above by a constant $\U$ if and only if the polynomial
\begin{align}
\label{eq: P def}
P(x,z) = \U-z^\T \Df(x) z - f(x)\cdot\nabla_x V(x,z) - \big[\Df(x) z - ( z^\T \Df(x) z)z\big] \cdot\nabla_z V(x,z) 
\end{align}
is nonnegative for all $(x,z)\in\cB\times\Sn$. Note that the $P(x,z)\ge0$ constraint in the context of the $(x,z)$ ODE \cref{eq: xz ODE} is the analogue of the $S(x)\ge0$ constraint appearing in \cref{eq: weak duality 2} in the context of the $x$ ODE alone. Just as $S(x)\ge0$ on $\cB$ if the SOS conditions in 
\cref{eq: SOS semialgebraic} hold, $P\ge0$ on $\cB\times\Sn$ if there exist polynomials $\rho_j(x,z)$ and SOS polynomials $\sigma_i(x,z)$ such that
\beq
\label{eq: SOS semialgebraic P}
\begin{aligned}
P - \textstyle\sum_{i=1}^I\sigma_ig_i - \sum_{j=0}^J\rho_jh_j 
&\in\Sigma_{2n},\\[-4pt]
\sigma_i&\in\Sigma_{2n}, \quad i=1,\ldots,I,
\end{aligned}
\eeq
where $h_0(z)=1-|z|^2$ so that it vanishes on $\Sn$, and the conditions $g_i(x)\ge0$ and $h_j(x)=0$ for $i,j\ge1$ specify $\cB$ as in \cref{eq: semialgebraic set}. Note that we allow non-compact $\cB$, including the case $\cB=\Rm$ wherein \cref{eq: SOS semialgebraic P} should be interpreted as the single SOS constraint $P-\rho_0h_0\in\Sigma_{2n}$. 

The SOS relaxation of the right-hand minimization in \cref{eq: LE weak duality,eq: LE strong duality} is the minimization of the constant $B$ that appears in $P(x,z)$, where the polynomials $V,\sigma_i,\rho_j$ are optimized subject to \cref{eq: SOS semialgebraic P}. The infimum of the relaxed problem is an upper bound on the right-hand infimum in \cref{eq: LE weak duality}, which gives the first part of \cref{prop: LE duality SOS} below with no further proof needed. If the specification of $\cB$ is Archimedean, then adding the constraint $h_0(z)=0$ gives an Archimedean specification of $\cB\times\Sn$. In this case the SOS relaxation does not change the value of the right-hand infimum in \cref{eq: LE weak duality,eq: LE strong duality}, as explained after \cref{eq: weak duality 3}. The second part of \cref{prop: LE duality SOS} thus follows from the second part of \cref{prop: LE duality}.

\begin{prop}
\label{prop: LE duality SOS}
Let $\ddt x(t)=f(x(t))$ with $f\in\Rm[x]$. Let $\cB\subset \Rm$ be either all of $\Rm$ or specified by a finite number of polynomial inequalities $g_i(x)\ge0$ and equalities $h_j(x)=0$ as in \cref{eq: semialgebraic set}. Let $h_0(z)=1-|z|^2$, and define $P(x,z)$ as in \cref{eq: P def}.
\begin{enumerate}
\item Let $\cB_0\subset \Rm$ be such that each trajectory $x(t)$ with $x_0\in\cB_0$ is bounded for $t\ge0$ and eventually remains in $\cB$. Then the maximal LE $\mu^*_{\cB_0}$ among trajectories with initial conditions in $\cB_0$ is bounded by
\beq
\label{eq: LE weak duality SOS}
\mu_{\cB_0}^* \leq \inf_{\substack{V,\sigma_i,\rho_j \in \Rxz }} \U \quad
\text{s.t.} \quad \begin{array}{rl} \\
 P - \sum_{i=1}^I\sigma_ig_i - \sum_{j=0}^I\rho_jh_j \in \Sigma_{2n}~ \\
\sigma_i \in \Sigma_{2n}, & i=1,\ldots,I.
\end{array}
\eeq
\item Let $\cB$ be compact and forward invariant, and suppose its semialgebraic specification is Archimedean. Then,
\beq
\label{eq: LE strong duality SOS}
\mu_{\cB}^* = \inf_{\substack{V,\sigma_i,\rho_j \in \Rxz }} \U \quad
\text{s.t.} \quad \begin{array}{rl} \\
 P - \sum_{i=1}^I\sigma_ig_i - \sum_{j=0}^I\rho_jh_j \in \Sigma_{2n}~ \\
\sigma_i \in \Sigma_{2n}, & i=1,\ldots,I.
\end{array}
\eeq
\end{enumerate}
\end{prop}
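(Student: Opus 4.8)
The plan is to derive both parts by pushing the abstract SOS machinery of \cref{sec: SOS time avg}, applied to the augmented ODE \cref{eq: xz ODE}, through \cref{prop: LE duality}. The pivotal observation---already recorded in the text preceding the statement---is that for a fixed polynomial $V$ and constant $\U$, the inner supremum over $(x,z)\in\cB\times\Sn$ on the right of \cref{eq: LE weak duality,eq: LE strong duality} is at most $\U$ precisely when $P(x,z)\ge0$ on $\cB\times\Sn$, with $P$ as in \cref{eq: P def}. This converts the minimax in \cref{eq: LE weak duality,eq: LE strong duality} into the minimization of $\U$ over $(V,\U)$ subject to the single pointwise constraint $P\ge0$ on $\cB\times\Sn$, exactly mirroring the passage to \cref{eq: weak duality 2} carried out for the $x$ ODE, but now for the $(x,z)$ ODE on the semialgebraic set cut out by the $g_i$, the $h_j$, and the sphere constraint $h_0(z)=1-|z|^2$.

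For part 1, I would observe that the conditions \cref{eq: SOS semialgebraic P} constitute a sufficient certificate for $P\ge0$ on $\cB\times\Sn$: on this set each $\sigma_i g_i$ is nonnegative and each $\rho_j h_j$ (including $j=0$) vanishes, so the membership $P-\sum_i\sigma_i g_i-\sum_j\rho_j h_j\in\Sigma_{2n}$ forces $P(x,z)\ge0$ there. Restricting $V$ to $\Rxz$ and strengthening the constraint $P\ge0$ to \cref{eq: SOS semialgebraic P} can only raise the infimum, so the SOS infimum in \cref{eq: LE weak duality SOS} dominates the right-hand side of \cref{eq: LE weak duality}; chaining this with the bound on $\mu^*_{\cB_0}$ from part 1 of \cref{prop: LE duality} yields the claim. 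No step here is delicate.

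For part 2, equality demands that the SOS strengthening lose nothing, which is exactly the exactness theorem of \cite{Lakshmi2020}: if the semialgebraic description of the constraint set is Archimedean, the relaxation \cref{eq: weak duality 3} is an equality. The one point needing genuine verification is therefore that the description of $\cB\times\Sn$ by $\{g_i\ge0,\ h_j=0,\ h_0=0\}$ is Archimedean whenever that of $\cB$ is. I would check this by exhibiting an explicit ball certificate for the product set, in the spirit of the redundant constraint $R^2-|x|^2\ge0$ discussed after \cref{eq: weak duality 3}: Archimedeanness of $\cB$ supplies an $N$ with $N-|x|^2$ in the quadratic module generated by the $g_i$ modulo the ideal of the $h_j$, and then
\[
(N+1)-|x|^2-|z|^2 = \bigl(N-|x|^2\bigr) + h_0(z),
\]
whose first summand lies in that quadratic module and whose second lies in the ideal generated by $h_0$, so $(N+1)-|x|^2-|z|^2$ belongs to the quadratic module of the product description. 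With $\cB\times\Sn$ thus Archimedean, \cite{Lakshmi2020} makes the SOS infimum in \cref{eq: LE weak duality SOS} equal to the right-hand side of \cref{eq: LE strong duality}; combining this with the strong-duality equality in part 2 of \cref{prop: LE duality}---valid because compact forward-invariant $\cB$ makes $\cB\times\Sn$ compact and forward-invariant for \cref{eq: xz ODE}---gives \cref{eq: LE strong duality SOS}. The main obstacle is precisely this Archimedean-product step; the remainder is bookkeeping that transfers the general results of \cref{sec: SOS time avg} to the lifted system.
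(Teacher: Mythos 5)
Your proposal is correct and follows essentially the same route as the paper: part 1 is the observation that the constraints \cref{eq: SOS semialgebraic P} certify $P\ge0$ on $\cB\times\Sn$ so the SOS infimum dominates the right-hand side of \cref{eq: LE weak duality}, and part 2 combines the exactness result of \cite{Lakshmi2020} with part 2 of \cref{prop: LE duality}. Your explicit certificate $(N+1)-|x|^2-|z|^2=(N-|x|^2)+h_0(z)$ for the Archimedean property of the product description is a correct verification of a step the paper merely asserts, and is a welcome addition.
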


The right-hand sides of \cref{eq: LE weak duality SOS,eq: LE strong duality SOS} are identical, but \cref{eq: LE strong duality SOS} requires stronger assumptions on $\cB$ along with choosing $\cB_0=\cB$ on the left-hand side.
The minimization on the right-hand sides of \cref{eq: LE weak duality SOS,eq: LE strong duality SOS} is over $V(x,z)$, $\rho_j(x,z)$, and $\sigma_i(x,z)$ in the infinite-dimensional space $\Rxz$ of $2n$-variate polynomials. Restricting each of these tunable polynomials to a finite-dimensional subspace---i.e., choosing a finite basis with tunable coefficients---gives an SOS program that can be solved computationally when $n$, $I$, $J$, and the polynomial subspace dimensions are not too large. As required for an SOS program, all tunable coefficients appear affinely in the SOS expressions since $B$ and $V$ appear affinely in the definition \cref{eq: P def} of $P$. Although restricting the tunable polynomials $V,\sigma_i,\rho_j$ to finite subspaces may increase the infimum on the right-hand sides of \cref{eq: LE weak duality SOS,eq: LE strong duality SOS}, one can always enlarge these subspaces and solve another SOS program with greater computational cost. In this way, a sequence of increasingly expensive SOS programs will give a nonincreasing sequence of infima that converge to the right-hand infimum in \cref{eq: LE weak duality SOS,eq: LE strong duality SOS}. Thus, when the equality \cref{eq: LE strong duality SOS} holds, SOS computations can give arbitrarily sharp upper bounds on the maximal LE $\mu^*_\cB$, at least in theory. The computational examples presented below in \cref{sec: examples} show that convergence of upper bounds to the maximal LE indeed can be achieved in practice.

\begin{remark}
\label{rem: general norm}
In the definition \cref{eq: LE def} of the LE, the Euclidean norm can be replaced by any other norm without changing the value of the LE~\cite{Meiss2017}. If one chooses a weighted Euclidean norm $|y|_\sfA=\big( y^\T \sfA y \big)^{1/2}$ for any positive definite $n\times n$ matrix $\sfA$, all results of \cref{sec: SOS LE} can be generalized analogously. In this generalization, $\Phi$ and $\ell$ are replaced by $\Phi_\sfA(x,z) = z^\T \sfA \Df(x) z$ and $\ell(x,z) = \Df(x)z-\Phi_\sfA(x,z) z$, and the normalization of $z$ requires $h_0(z)=1-|z|_\sfA^2=0$. The resulting SOS minimization on the right-hand sides of \cref{eq: LE weak duality SOS,eq: LE strong duality SOS} is modified only by the matrix $\sfA$ appearing affinely in $P$ and in $h_0$. For an SOS program defined by fixing the spaces over which $V,\sigma_i,\rho_j$ are optimized, different fixed $\sfA$ may give smaller or larger minima, meaning better or worse upper bounds on the maximal LE. One might tune $\sfA$ along with $V,\sigma_i,\rho_j$ to minimize the bound. However, doing so would make the minimization non-convex because $\sfA$ and $\rho_0$ multiply each other in the first SOS constraint, so we do not pursue this idea here.
\end{remark}

\subsection{Symmetries}
\label{sec: sym}

Making use of symmetries in SOS programming significantly reduces computational cost and numerical imprecision. In particular, when a polynomial that is constrained to be SOS has a known symmetry, block diagonal structure can be imposed on the matrix representation of that polynomial in the corresponding semidefinite program~\cite{Gatermann2004}. For the SOS programs described in \cref{sec: SOS time avg} in the general setting of bounding time averages in ODEs, the SOS programs can be formulated to have symmetries if the ODEs and the quantities whose time averages are being bounded have corresponding symmetries~\cite{Goluskin2019, Lakshmi2020}. Since our framework for bounding maximal LEs is a particular application of \cref{sec: SOS time avg}, we can use any symmetries that are shared by the $(x,z)$ ODE \cref{eq: xz ODE}, the domain $\cB\times\Sn$, and the function $\Phi=z^\T\Df(x)z$ whose time average is equal to an LE. \Cref{sec: sym dynamics} describes how orthogonal symmetries of the $x$ dynamics induce symmetries of the $(x,z)$ dynamics, and \cref{sec: sym opt} describes how the latter lead to symmetries in the SOS programs that give upper bounds on LEs.

\subsubsection{Symmetries of the $(x,z)$ dynamics}
\label{sec: sym dynamics}

An ODE is said to be symmetric under a transformation if solutions are mapped to solutions. Let $\Lambda\in GL(n)$, where $GL(n)$ denotes the group of invertible linear transformations on $\Rm$. An ODE $\ddt x=f(x)$ on $\R^n$ is symmetric under $\Lambda$ if and only if $f$ is \emph{equivariant} under $\Lambda$, meaning $f(\Lambda x)=\Lambda f(x)$ for all $x\in\R^n$. A function $\Phi$ is \emph{invariant} under $\Lambda$ if $\Phi(\Lambda x)=\Phi(x)$ for all $x\in\R^n$. The invariant transformations of any function form a group, as do its equivariant transformations. A function is said to be $\cG$-invariant (resp.\ $\cG$-equivariant) for a group $\cG$ if it is invariant (resp.\ equivariant) under all $\Lambda\in\cG$. Turning particularly to the $(x,z)$ dynamics~\cref{eq: xz ODE}, we seek a symmetry group for which the right-hand side of the ODE is equivariant, and the function $\Phi=z^\T \Df(x)z$ is invariant, under all transformations in the group.

The transformation $(x,z)\mapsto (x,-z)$ is a symmetry both of the $(x,z)$ ODE \cref{eq: xz ODE} and of the function $\Phi(x,z)=z^\T \Df(x)z$, reflecting the fact that the growth or decay rate of a vector in the tangent space is unaffected by reversing its direction. The equivariance of the ODE right-hand side under this transformation amounts to the oddness $\ell(x,-z)=-\ell(x,z)$, where $\ell(x,z)$ is defined as in \cref{eq: ell}, and the invariance $\Phi(x,-z)=\Phi(x,z)$ is clear.

Further symmetries of the $(x,z)$ dynamics are guaranteed only if the $x$ ODE alone is symmetric under orthogonal transformations, common examples of which include sign changes or rotations. In particular, if the $x$ dynamics are symmetric under $\Lambda\in O(n)$, where $O(n)$ denotes the group of orthogonal linear transformations on $\Rm$, then the $(x,z)$ dynamics are symmetric under $(x,z)\mapsto(\Lambda x,\Lambda z)$, and so is the function $\Phi(x,z)$. To state this precisely in the following proposition, for each group $\cG\subset O(n)$ we define the corresponding group $\cG'\in O(2n)$ as
\beq
\label{eq: GG}
\cG' = \left\{\begin{bmatrix}\Lambda&0\\0&\pm\Lambda\end{bmatrix}\in O(2n)~:~\Lambda\in\cG\subset O(n) \right\}.
\eeq

\begin{prop}
\label{prop: orthogonal sym}
Let $f\in\mathcal{C}^1(\Rm,\Rm)$. Define $\ell:\Rm\times\Sn\to\Rm$ and $\Phi:\Rm\times\Sn\to\R$~by $\ell(x,z) = \Df(x) z - [z^\T \Df(x) z] z$ and $\Phi(x,z)= z^\T \Df(x)z$. If $f$ is $\cG$-equivariant for some $\cG\subset O(n)$, then $(f,\ell)$ is $\cG'$-equivariant for $\cG'$ defined by~\cref{eq: GG}, and $\Phi$ is $\cG'$-invariant.
\end{prop}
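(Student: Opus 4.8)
The plan is to reduce everything to a single consequence of the hypothesis: equivariance of $f$ forces its Jacobian to transform by conjugation. Differentiating $f(\Lambda x)=\Lambda f(x)$ in $x$ via the chain rule gives $\Df(\Lambda x)\,\Lambda=\Lambda\,\Df(x)$, and since $\Lambda\in O(n)$ satisfies $\Lambda^{-1}=\Lambda^\T$, this rearranges to
\[
\Df(\Lambda x)=\Lambda\,\Df(x)\,\Lambda^\T .
\]
Every remaining claim then follows by substituting this identity and repeatedly using $\Lambda^\T\Lambda=I$. I would establish this conjugation rule first, since it is the only place the structure of $f$ is used.

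Next I would verify the invariance of $\Phi$. Writing a generic element of $\cG'$ as $(x,z)\mapsto(\Lambda x,s\Lambda z)$ with $s\in\{+1,-1\}$ and abbreviating $M=\Df(x)$, the computation is
\[
\Phi(\Lambda x,s\Lambda z)=(s\Lambda z)^\T\Df(\Lambda x)(s\Lambda z)
=s^2\,z^\T\Lambda^\T(\Lambda M\Lambda^\T)\Lambda z
=z^\T M z=\Phi(x,z),
\]
where the two factors of $\Lambda^\T\Lambda$ collapse to the identity and $s^2=1$. This is precisely the step in which orthogonality, rather than mere invertibility, is essential: the quadratic form only simplifies because $\Lambda^\T\Lambda=I$.

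I would then treat the equivariance of $(f,\ell)$ componentwise against the block form of $\cG'$ in \cref{eq: GG}. The first component is immediate, being the hypothesis $f(\Lambda x)=\Lambda f(x)$. For the second component I would insert the Jacobian identity into the definition of $\ell$, obtaining $\Df(\Lambda x)(s\Lambda z)=s\Lambda M\Lambda^\T\Lambda z=s\Lambda M z$, while the scalar coefficient $(s\Lambda z)^\T\Df(\Lambda x)(s\Lambda z)$ equals $z^\T M z$ by the invariance of $\Phi$ just established. Subtracting and factoring out $s\Lambda$ yields
\[
\ell(\Lambda x,s\Lambda z)=s\Lambda\big[Mz-(z^\T M z)z\big]=s\Lambda\,\ell(x,z),
\]
which is exactly the second-component equivariance required by the block matrix $\bigl[\begin{smallmatrix}\Lambda&0\\0&s\Lambda\end{smallmatrix}\bigr]$.

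No step presents a genuine obstacle: once the conjugation identity is in hand, the argument is substitution followed by cancellation. The only point demanding care is bookkeeping of where orthogonality enters—it is needed to collapse the quadratic form defining $\Phi$, and hence also in the scalar coefficient appearing in $\ell$, whereas the linear pieces would survive under any invertible $\Lambda$. The sign $s=\pm1$ plays no role beyond $s^2=1$, reflecting the direction-reversal symmetry $(x,z)\mapsto(x,-z)$ already noted before the proposition.
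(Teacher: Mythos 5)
Your proof is correct and follows essentially the same route as the paper's: differentiate the equivariance identity to obtain the Jacobian relation $\Df(\Lambda x)\Lambda=\Lambda\Df(x)$, use orthogonality to collapse the quadratic form for the invariance of $\Phi$, and then deduce the equivariance of $\ell$ from that invariance. The only cosmetic difference is that you carry the sign $s=\pm1$ through a single uniform computation, whereas the paper splits $\cG'$ into the two generating transformations $(x,z)\mapsto(x,-z)$ and $(x,z)\mapsto(\Lambda x,\Lambda z)$ and treats them separately.
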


\begin{proof}
Let $\Lambda\in\cG$. Invariance under $\cG'$ is equivalent to invariance under $(x,z)\mapsto(x,-z)$ and under $(x,z)\mapsto(\Lambda x,\Lambda z)$, and likewise for equivariance. For $(x,z)\mapsto(x,-z)$, the equivariance of $(f,\ell)$ and invariance of $\Phi$ are immediate, as described above in the text. For $(x,z)\mapsto(\Lambda x,\Lambda z)$, the invariance of $\Phi$ is shown by
\begin{align}
\Phi(\Lambda x , \Lambda z) = (\Lambda z)^\T \Df(\Lambda x) \Lambda z = z^\T \Lambda^\T \Lambda \Df(x) z = z^\T \Df(x) z = \Phi(x,z),
\end{align}
where the second equality uses the relation $\Df(\Lambda x) \Lambda = \Lambda \Df(x)$ that is found by differentiating the equivariance definition $f(\Lambda x)=\Lambda f(x)$, and the third equality requires $\Lambda\in O(n)$. The claimed equivariance of $(f,\ell)$ holds if $f(\Lambda x)=\Lambda f(x)$, which is true by assumption, and also $\ell(\Lambda x,\Lambda z)=\Lambda\ell(x,z)$. To show the latter, we note that $\ell$ is related to $\Phi$ by $\ell(x,z) = \Df(x) z - \Phi(x,z) z$, and we use the relation $\Df(\Lambda x) \Lambda = \Lambda \Df(x)$ again along with the invariance of $\Phi$:
\begin{align}
\ell(\Lambda x , \Lambda z) = \Df(\Lambda x) \Lambda z -  \Phi(\Lambda x, \Lambda z) \Lambda z = \Lambda [ \Df(x) z -  \Phi(x,z) z ] = \Lambda \ell(x,z).
\end{align}
This completes the proof.
\end{proof}

\begin{remark}
\label{rem: similarity}
The symmetry results described above and in \cref{sec: sym opt} below are not directly applicable if $\cG\not\subset O(n)$, but they are applicable after a change of variables if $\cG$ is a subgroup of $O(n)$ after a similarity transformation. Concretely, suppose the $x$ dynamics has a symmetry group $\cG_\sfM\not\subset O(n)$ that takes the form
\beq
\label{eq: G similar}
\cG_\sfM = \{ \sfM^{-1}\Lambda \sfM~:~\Lambda\in\cG \}
\eeq
for any fixed $\cG\subset O(n)$ and $\sfM\in GL(n)$. (Groups of the form \cref{eq: G similar} are exactly all compact subgroups of $GL(n)$ \cite[theorem 0.3.5]{Bredon1972}.) Defining $\tilde x(t)=\sfM x(t)$, so that tangent space vectors for $\tilde x(t)$ are $\sfM y(t)$, we note that the $x$ dynamics and the $\tilde x$ dynamics have the same LEs. Thus the framework of \cref{sec: SOS LE} for bounding LEs can be applied to the $\tilde x$ ODE $\ddt \tilde x=\sfM f(\sfM^{-1}\tilde x)$, rather than to the $x$ ODE. The $\tilde x$ ODE has the orthogonal symmetry group $\cG$, so according to \cref{prop: orthogonal sym} the $(x,z)$ dynamics is symmetric under $\cG'$.
\end{remark}

\subsubsection{Symmetries of the optimization problems}
\label{sec: sym opt}

If an ODE, its domain, and a time-averaged quantity share a compact linear group of symmetries, then the upper bound on this time average given by the right-hand infimum in \cref{eq: weak duality,eq: strong duality} is unchanged if the auxiliary function $V$ is further constrained to be invariant under the same symmetries. In particular, any $V$ that yields an upper bound on a time average can be used to construct a symmetrized $V$ yielding the same bound. This is proved in \cite[Proposition A.1]{Goluskin2019} in the context of a finite cyclic symmetry group. \Cref{prop: sym V} in the appendix generalizes the result to all compact groups of linear transformations---i.e., all groups of orthogonal transformations, along with similarity transformations \cref{eq: G similar} of such groups.

We apply these symmetry results in the context of bounding LEs by the infimum on the right-hand sides of \cref{eq: LE weak duality,eq: LE strong duality}, in which case the pertinent symmetry group $\cG'$ of the $(x,z)$ dynamics is given by \cref{prop: orthogonal sym} above. Provided that the domain $\cB$ is $\cG$-invariant, the following proposition guarantees that corresponding symmetries can be imposed on $V(x,z)$ in the right-hand minimization of \cref{eq: LE weak duality} and \cref{eq: LE strong duality} without changing the resulting upper bound.

\begin{prop}
\label{prop: sym V LE}
Let $\cG\subset O(n)$ and $f\in\mathcal{C}^1(\cB,\Rm)$. If $f$ is $\cG$-equivariant and $\cB$ is $\cG$-invariant, then the infimum over $V\in\mathcal C^1(\cB\times\Sn)$ on the right-hand sides of \cref{eq: LE weak duality,eq: LE strong duality} is unchanged if $V$ is constrained to be $\cG'$-invariant for $\cG'$ defined by~\cref{eq: GG}.
\end{prop}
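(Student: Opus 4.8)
The plan is to reduce this statement to the general symmetrization result for time averages, \cref{prop: sym V}, which is invoked in the text just above the proposition and which guarantees that auxiliary functions may be symmetrized without changing the bound whenever an ODE, its domain, and the time-averaged quantity share a compact linear symmetry group. Concretely, I would verify that the three objects appearing in the $(x,z)$-formulation---the ODE right-hand side $(f,\ell)$ from \cref{eq: xz ODE}, the domain $\cB\times\Sn$, and the quantity $\Phi(x,z)=z^\T\Df(x)z$ whose time average equals the LE---are all symmetric under the group $\cG'$ defined in \cref{eq: GG}, and then apply \cref{prop: sym V} verbatim.

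First I would observe that the equivariance of $(f,\ell)$ and the invariance of $\Phi$ under $\cG'$ are exactly the content of \cref{prop: orthogonal sym}, whose hypotheses ($f\in\cC^1$ and $f$ being $\cG$-equivariant with $\cG\subset O(n)$) match the hypotheses assumed here. So that step requires no new work; I would simply cite \cref{prop: orthogonal sym}. Second, I would check that the domain $\cB\times\Sn$ is $\cG'$-invariant. This splits into two pieces according to the block structure in \cref{eq: GG}: the factor $\cB$ must be invariant under each $\Lambda\in\cG$, which holds by the hypothesis that $\cB$ is $\cG$-invariant; and the factor $\Sn$ must be invariant under each $\pm\Lambda$, which holds because $\Lambda\in O(n)$ preserves the Euclidean norm and hence maps the unit sphere onto itself, as does $-\Lambda$. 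Thus $\cB\times\Sn$ is invariant under every element of $\cG'$.

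The one genuine point to address is that $\cG'$ is a compact linear group, since that is the standing hypothesis of \cref{prop: sym V}. Here I would note that $\cG\subset O(n)$ together with the closedness assumption makes $\cG$ compact; the map $\Lambda\mapsto\mathrm{diag}(\Lambda,\pm\Lambda)$ that generates $\cG'$ is continuous and has image in the compact group $O(2n)$, so $\cG'$ is a compact subgroup of $O(2n)$ and in particular a compact linear group. (If $\cG$ arises as a similarity transform of an orthogonal group, as in \cref{rem: similarity}, one works in the straightened $\tilde x$ variables so that $\cG'$ is again orthogonal.) With equivariance of $(f,\ell)$, invariance of $\Phi$, invariance of $\cB\times\Sn$, and compactness of $\cG'$ all in hand, \cref{prop: sym V} applied to the ODE \cref{eq: xz ODE} with state space $\Rm\times\Sn$, quantity $\Phi$, and symmetry group $\cG'$ yields that the infimum over $V\in\cC^1(\cB\times\Sn)$ is unchanged under the additional constraint that $V$ be $\cG'$-invariant. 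This is precisely the claim, and it completes the proof. I do not expect any serious obstacle: the work is entirely in assembling the hypotheses of \cref{prop: sym V}, with the only subtlety being the verification that $\cG'$ is compact and that both sign choices $\pm\Lambda$ preserve $\Sn$.
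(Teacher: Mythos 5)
Your proposal is correct and follows essentially the same route as the paper's proof: cite \cref{prop: orthogonal sym} for the $\cG'$-equivariance of $(f,\ell)$ and $\cG'$-invariance of $\Phi$, check that $\cB\times\Sn$ is $\cG'$-invariant, and then apply \cref{prop: sym V} to the $(x,z)$ system. Your extra remark on the compactness of $\cG'$ is a reasonable bit of bookkeeping that the paper leaves implicit.
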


\begin{proof}
Under the assumptions on $\cB$ and $f$, \cref{prop: orthogonal sym} guarantees that $(f,\ell)$ is $\cG'$-equivariant and $\Phi=z^\T Df z$ is $\cG'$-invariant. The domain $\cB\times\Sn$ is also $\cG'$-invariant because $\cB$ is $\cG$-invariant by assumption and $\Sn$ is $\cG$-invariant for any $\cG\subset O(n)$. We can therefore apply \cref{prop: sym V} from the appendix in the context of symmetry group $\cG'$, domain $\cB\times\Sn$, ODE right-hand side $(f,\ell)$, and $\Phi=z^\T Df z$. In this context the proposition guarantees that if there exist $V \in \cC^1(\cB\times\Sn)$ and $\U\in\R$ such that $P(x,z) \geq 0$ for all $(x,z)\in\cB\times\Sn$, where $P$ is as defined by \cref{eq: P def}, then there exists $\widehat V(x,z) \in \cC^1(\cB\times\Sn)$ that satisfies the same inequality and is $\cG'$-invariant. Therefore, the infimum~of
\beq
\label{eq: min P constraint}
\inf_{V \in\cC^1(\cB\times\Sn)} \U \quad \text{s.t.}  \quad P(x,z) \geq 0~~\forall (x,z)\in\cB\times\Sn
\eeq
is unchanged if $V$ is further constrained to be $\cG'$-invariant. The constrained minimization problem \cref{eq: min P constraint} is equivalent to the minimax problem on the right-hand sides of \cref{eq: LE weak duality,eq: LE strong duality}, so the claim is proved.
\end{proof}

\Cref{prop: sym V LE}, which allows invariance to be imposed on $V$ in the minimization that appears on the right-hand sides of \cref{eq: LE weak duality,eq: LE strong duality}, has an analogue for the minimization's SOS relaxation that appears on the right-hand sides of \cref{eq: LE weak duality SOS,eq: LE strong duality SOS}. This is stated by \cref{prop: sym SOS LE} below, whose assumptions require that the $\cG$-invariant set $\cB$ is specified in terms of $\cG$-invariant polynomials, and whose conclusions allow invariance to be imposed not only on $V$ but also on any other tunable polynomials $\sigma_i$ and $\rho_j$. We do not include a proof because the result is a direct application of \cref{prop: sym SOS} in the same way that \cref{prop: sym V LE} is an application of \cref{prop: sym V}.

\begin{prop}
\label{prop: sym SOS LE}
Let $\cG\subset O(n)$ and $f\in\R^n[x]$. Let $\cB\subset\R^n$ be either all of $\Rm$ or specified by a finite number of polynomial inequalities $g_i(x)\ge0$ and equalities $h_j(x)=0$ as in \cref{eq: semialgebraic set}. If $f$ is $\cG$-equivariant and all $g_i$ and $h_j$ are $\cG$-invariant, then the infimum on the right-hand sides of \cref{eq: LE weak duality SOS,eq: LE strong duality SOS} over $V,\sigma_i,\rho_j\in\Rxz$ (or over subspaces of $\Rxz$) is unchanged if $V$ and all $\sigma_i$ and $\rho_j$ are constrained to be $\cG'$-invariant for $\cG'$ defined by \cref{eq: GG}. When $V$ and all $\sigma_i$ and $\rho_j$ are $\cG'$-invariant, the first expression constrained to be SOS in \cref{eq: LE weak duality SOS,eq: LE strong duality SOS} is $\cG'$-invariant also.
\end{prop}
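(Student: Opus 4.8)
The plan is to follow the proof of \cref{prop: sym V LE} line for line, substituting the appendix's \cref{prop: sym SOS} for \cref{prop: sym V}, and then to settle the final invariance claim by a short direct computation; the work reduces to checking the hypotheses of \cref{prop: sym SOS} and reading off its conclusion. First I would set up the symmetry structure. Because $f$ is $\cG$-equivariant, \cref{prop: orthogonal sym} gives that $(f,\ell)$ is $\cG'$-equivariant and that $\Phi=z^\T\Df z$ is $\cG'$-invariant. The domain $\cB\times\Sn$ is $\cG'$-invariant since $\cB$ is $\cG$-invariant by hypothesis and $\Sn$ is invariant under every $\Lambda\in O(n)$ and under $z\mapsto-z$. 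The point specific to the SOS setting is that the defining polynomials must be $\cG'$-invariant: the constraints $g_i$ and $h_j$ depend on $x$ alone and are $\cG$-invariant by hypothesis, hence $\cG'$-invariant as functions of $(x,z)$, while the sphere constraint $h_0(z)=1-|z|^2$ is $\cG'$-invariant because $|\pm\Lambda z|^2=|z|^2$ for orthogonal $\Lambda$. These are exactly the compatibility conditions that \cref{prop: sym SOS} requires.

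With the hypotheses verified I would invoke \cref{prop: sym SOS} in the context of the group $\cG'$, the domain $\cB\times\Sn$, the dynamics $(f,\ell)$, and $\Phi$. Applied to any feasible triple $(V,\sigma_i,\rho_j)$ for the constraints of \cref{eq: LE weak duality SOS,eq: LE strong duality SOS} with a given objective value $\U$, it returns a $\cG'$-invariant triple satisfying the same constraints with the same $\U$; the construction is averaging over $\cG'$ against its Haar measure, which preserves the SOS cone (as $\sigma\circ\Gamma$ is SOS whenever $\sigma$ is and $\Gamma$ is linear) and preserves the affine SOS constraint because the constraint polynomials and $P$ transform compatibly, while leaving the constant $\U$ fixed. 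Hence the infimum is unchanged under the $\cG'$-invariance restriction. For the ``over subspaces'' clause I would observe that the admissible subspaces are degree-truncated, hence $\cG'$-invariant because $\cG'\subset O(2n)$ acts linearly and preserves degree, so averaging stays inside the subspace and the same conclusion applies.

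For the second assertion I would verify directly that $P-\sum_i\sigma_ig_i-\sum_j\rho_jh_j$ is $\cG'$-invariant whenever $V,\sigma_i,\rho_j$ are. Writing $w=(x,z)$, from \cref{eq: P def} the polynomial $P$ is the sum of the constant $\U$, the invariant $-\Phi$, and minus the Lie derivative $f\cdot\nabla_xV+\ell\cdot\nabla_zV$ of $V$ along the $(x,z)$ flow; this last term is invariant because differentiating $V(\Gamma w)=V(w)$ for $\Gamma\in\cG'$ gives $\nabla V(\Gamma w)=\Gamma\nabla V(w)$, and combined with $(f,\ell)(\Gamma w)=\Gamma(f,\ell)(w)$ and $\Gamma^\T\Gamma=I$ one finds the Lie derivative is unchanged by $\Gamma$. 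Each product $\sigma_ig_i$ and $\rho_jh_j$ is a product of $\cG'$-invariant factors and hence invariant, so the full expression is $\cG'$-invariant.

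I expect all of the genuine difficulty to reside inside the appendix's \cref{prop: sym SOS}, whose averaging argument must simultaneously preserve SOS membership, the affine equality defining each SOS constraint, and the objective; here that work is imported wholesale. The only real obstacle at this level is bookkeeping: confirming that every defining polynomial, and in particular the sphere constraint $h_0$, is $\cG'$-invariant so that \cref{prop: sym SOS} applies verbatim, and confirming in the second assertion that the Lie-derivative term inherits invariance from the $\cG'$-equivariance of $(f,\ell)$ together with the orthogonality $\cG'\subset O(2n)$.
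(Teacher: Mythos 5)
Your proposal is correct and follows exactly the route the paper intends: the paper omits a proof precisely because the result is ``a direct application of \cref{prop: sym SOS} in the same way that \cref{prop: sym V LE} is an application of \cref{prop: sym V},'' and your write-up simply fills in those details --- checking $\cG'$-invariance of the domain and of the defining polynomials (including $h_0$), invoking the Haar-averaging argument of \cref{prop: sym SOS} for the $(x,z)$ system, and verifying the invariance of the Lie-derivative term for the final claim. No gaps; the only detail worth keeping explicit is the one you already flag, namely that the ``over subspaces'' clause requires the subspaces to be closed under the linear action of $\cG'$, as degree-truncated spaces are.
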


\Cref{prop: sym SOS LE} has practical implications for the SOS programs whose solutions give upper bounds on the maximal LE---i.e., the SOS programs obtained by restricting the minimization on the right-hand sides of \cref{eq: LE weak duality SOS,eq: LE strong duality SOS} to finite spaces of polynomials. Imposing $\cG'$-invariance on $V,\sigma_i,\rho_j$ does not change the resulting upper bounds on LEs, but it ensures that all expressions constrained to be SOS are $\cG'$-invariant. Exploiting this latter invariance in the numerical solution of SOS programs can greatly improve computational cost and accuracy~\cite{Gatermann2004}.

\begin{remark}
If the $x$ ODE has no symmetries, then $\cG$ contains only the identity, and $\cG'$ contains the identity and the transformation $(x,z)\mapsto(x,-z)$. In this case \cref{prop: sym V LE} reduces to the statement that making $V(x,z)$ even in $z$ does not change the resulting upper bounds on LEs, and \cref{prop: sym SOS LE} says the same about making all tunable polynomials even in~$z$.
\end{remark}

\section{Computational examples}
\label{sec: examples}

This section presents two examples for which we have computed upper bounds on the maximal LE using SOS programming, in particular by numerically solving the minimization on the right-hand sides of \cref{eq: LE weak duality SOS,eq: LE strong duality SOS} with the tunable polynomials restricted to various maximum degrees. The example of \cref{sec: Lorenz} is the Lorenz system, which has a chaotic attractor. The example of \cref{sec: HH} is the H\'enon--Heiles system with a chosen range of energy values, which is Hamiltonian and chaotic. The computed upper bounds converge rapidly to the maximal LE as we raise the degrees of the tunable polynomials. In both examples our best bounds are sharp to at least five digits.

The SOS programs whose solutions we report below were solved numerically in the standard way, wherein each polynomial that must be SOS is represented by a symmetric matrix using monomial bases. This results in matrices subject to affine and semidefinite constraints, constituting a semidefinite program that can be solved numerically using a variety of standard software. Here we used a version~\cite{aeroimperial-yalmip} of the parser YALMIP~\cite{Lofberg2009, YALMIP2016} to specify SOS programs, translate them into semidefinite programs, and pass the latter to a solver. For the solver we used Mosek~\cite{Mosek2000}, which is not open source but implements a second-order interior-point algorithm.

\subsection{The Lorenz system}
\label{sec: Lorenz}

Consider the Lorenz system~\cite{Lorenz1963},
\beq
\label{eq: Lorenz}
\ddt\begin{bmatrix} x_1\\x_2\\x_3\end{bmatrix} = 
\begin{bmatrix} 
\sigma (x_2 - x_1 ) \\
r x_1-x_2-x_1 x_3 \\
x_1 x_2 - \beta x_3
\end{bmatrix},
\eeq
with the standard chaotic parameter values $(\beta,\sigma,r) = (8/3,10,28)$. For the unstable fixed point at the origin, or any trajectory approaching it along its stable manifold, the LEs are the real parts of the eigenvalues of $\Df(\mathbf 0)$. Thus the leading LE at the origin is $\mu_1= \bigl(-1-\sigma+\sqrt{1-2\sigma+4r \sigma + \sigma^2} \bigr )/2 \approx11.82772$. In fact this is the maximal LE among all trajectories of the Lorenz system, as verified by an upper bound on the maximal LE that is saturated by $\mu_1$ at the origin. This sharp upper bound has been proved analytically~\cite{Leonov2016a} by a method that is not sharp in general but that is sometimes sharp when  the maximal LE is attained on a fixed point. The computations we report below also give the sharp upper bound, at least up to numerical error.

We use the SOS approach of \cref{sec: LE SOS} to compute upper bounds on the maximal LE $\mu^*_{\R^3}$ of the Lorenz system. For any $\cB$ containing the global attractor, expression \cref{eq: LE weak duality SOS} gives an upper bound. We choose $\cB=\R^3$, so the constraints on the right-hand side of \cref{eq: LE weak duality SOS} reduce to simply $P-\rho_0h_0\in\Sigma_6$. For this non-compact $\cB$, part 2 of \cref{prop: LE duality SOS} does not apply to guarantee sharpness of the upper bound, nonetheless the computations described below apparently give the sharp result. The SOS programs we solve are obtained by restricting $V$ and $\rho_0$ to finite-dimensional spaces, in particular by specifying a maximum degree $d$ and imposing invariances that, according to \cref{prop: sym SOS LE}, do not change the optimum of the SOS program. In addition to the invariance under negation of $z$ that can always be imposed, the symmetry of the Lorenz equations under $(x_1,x_2,x_3)\mapsto(-x_1,-x_2,x_3)$ induces another, so $V$ and $\rho_0$ are optimized within the spaces
\beq
\label{eq: Vd}
\mathcal V_d = \left\{ p\in\R[x,z]_d ~:~ p(x,z)=p(x,-z)=p(\Lambda x,\Lambda z)~~\forall (x,z)\in\R^3\times\R^3 \right\},
\eeq
where $\Lambda$ negates the first two coordinates, and $\R[x,z]_d$ denotes the space of polynomials in $(x,z)$ with total degree no larger than $d$. In our computations we let $\rho_0\in\mathcal V_d$ for various $d$, but we keep $V\in\mathcal V_2$ since quadratic $V$ suffice for sharp bounds in this case. Each fixed $d$ gives an SOS program that we solve to obtain an upper bound on the maximal LE of the Lorenz system:
\beq
\label{eq: Lorenz SOS}
\mu_{\R^3}^* \leq \inf_{\substack{V\in\cV_2\\ \rho_0\in\cV_d}} \U
\quad \text{s.t.} \quad P-\rho_0(1-|z|^2) \in \Sigma_6,
\eeq
where $P(x,z)$ is defined as in \cref{eq: P def}.

\Cref{tab: table Lorenz bounds} reports the upper bounds \cref{eq: Lorenz SOS} found by numerically solving the SOS programs with the maximum degree of $\rho_0$ fixed to $d=2$, 4, and 6. The upper bounds are converged to at least 7 digits when $d\ge4$, and we expect that the exact minimum of the SOS program with $d=4$ is identical to the maximal LE of the Lorenz system, meaning \cref{eq: Lorenz SOS} is an equality in this case.

\begin{table}[t]
\caption{Upper bounds on the global maximal LE $\mu^*_{\R^3}$ of the Lorenz system \cref{eq: Lorenz} at the standard parameters, found by numerically solving the right-hand SOS program in \cref{eq: Lorenz SOS} with quadratic $V$ and with the maximum degree $d$ of $\rho_0$ fixed to various values. Tabulated bounds are rounded to the precision shown. To this precision, the converged bound is saturated by the fixed point at the origin (see text).}
\label{tab: table Lorenz bounds}
\begin{center}
\begin{tabular}{cc}
Degree of $\rho_0$ & Upper bound on maximal LE \\[2pt] \hline
2  & 14.02562 \\ 
4   &  11.82772 \\
6   &  11.82772 
\end{tabular}
\end{center}
\end{table}

The results of \cref{tab: table Lorenz bounds} constitute an example of our method giving sharp upper bounds on the maximal LE of a chaotic system using polynomials of modest degree---and, therefore, using SOS computations of modest cost. The example is relatively easy, even though the Lorenz system is chaotic, in the sense that the maximal LE is attained by the fixed point at the origin rather than a more complicated trajectory. In the next subsection we apply our method to a more challenging example.

\subsection{The H\'enon--Heiles system}
\label{sec: HH}

To demonstrate the success of our method when the maximal LE is attained on an orbit that is more complicated than a fixed point, we consider the H\'enon--Heiles system~\cite{Henon1964, Shevchenko2003}, which is Hamiltonian. The Hamiltonian function is 
\begin{align}
\label{eq: HH Ham}
H(x_1,x_2,x_3,x_4) = \frac{1}{2} (x_1^2 +x_2^2+x_3^2+x_4^2) + x_1^2 x_2 - \frac{1}{3} x_2^3, 
\end{align}
where $x_1$ and $x_2$ are position variables with corresponding momentum variables $x_3$ and $x_4$. This $H$ gives rise to the ODE system
\beq
\label{eq: HH}
\ddt\begin{bmatrix} x_1\\x_2\\x_3\\x_4\end{bmatrix} = 
\begin{bmatrix} 
x_3 \\
x_4 \\
-x_1 -2x_1x_2 \\
- x_2 - x_1^2 + x_2^2
\end{bmatrix}.
\eeq

For our example, we seek a set that is invariant under the dynamics and where the maximal LE is not attained by a fixed point. The fixed points of \cref{eq: HH} have energies of $H=0$ and $H = 1/6$. The three fixed points with $H=1/6$, which are related by symmetry as described below, have a large leading LE. We omit these points by restricting attention to the set where $0\le H\le1/7$, which is dynamically invariant since $H$ is conserved along trajectories. This set consists of several disconnected regions in $\R^4$, one of which is bounded; \cref{fig: HH triangle} shows the intersection of these regions with the $(x_1,x_2)$ plane. To restrict to the bounded region we add the condition $x_1^2+x_2^2\le1$, so the region of interest is the compact semialgebraic set
\beq
\label{eq: Triangle domain HH}
\B = \{ x \in \R^4~:~g_i(x) \geq 0~\text{for }i = 1,2,3 \}, 
\eeq
where
\begin{subequations}
\label{eq: HH gi}
\begin{align}
g_1(x) & = 1/7 - H(x_1,x_2,x_3,x_4), \\
g_2(x) & = H(x_1,x_2,x_3,x_4), \\
g_3(x) & = 1 - x_1^2 - x_2^2.
\end{align}
\end{subequations}
The set $\cB$ is invariant under the dynamics since the invariant region where $0\le H\le1/7$ does not intersect the surface where $g_3=0$. We have verified using SOS computations (although not analytically) that the semialgebraic specification of $\cB$ is Archimedean, in which case part 2 of \cref{prop: LE duality SOS} guarantees that \cref{eq: LE strong duality SOS} holds, so SOS programs with increasing polynomial degrees give arbitrarily sharp upper bounds on $\mu^*_\cB$.

\begin{figure}[t]
\begin{center}
\includegraphics[scale=.50, trim={4pt 24pt 30pt 48pt}, clip]{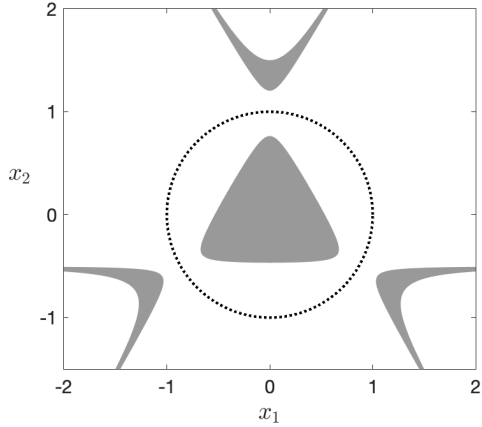}
\end{center}
\caption{The shaded areas show the intersection of the $(x_1,x_2)$ plane with the regions where $0 \leq H \leq 1/7$. These disconnected regions are separated by the set where $x_1^2+x_2^2=1$ (\shortdottedrule). The central shaded area is where the compact region $\cB$ intersects the $(x_1,x_2)$ plane.}
\label{fig: HH triangle}
\end{figure}

We use the SOS approach of \cref{sec: LE SOS} to compute upper bounds on the maximal LE $\mu^*_{\cB}$ among trajectories in the dynamically invariant set $\cB$. In order to impose symmetries on the SOS programs as described in \cref{sec: sym opt}, we note that the H\'enon--Heiles ODE has a symmetry group $\cG\subset O(4)$. Each $\Lambda\in\cG$ is a transformation in which some $A\in D_3$ acts on both the position vector $(x_1,x_2)$ and the momentum vector $(x_3,x_4)$, where $D_3$ is the dihedral group generated by two planar transformations: reflection in the first coordinate and rotation by $2\pi/3$. Since the $g_i$ polynomials defining $\cB$ are $\cG$-invariant also, \cref{prop: sym SOS LE} guarantees that the upper bound from the SOS program is unchanged if $\cG'$-invariance is imposed on all tunable polynomials. However, the software~\cite{aeroimperial-yalmip} with which we implement SOS programs can automatically exploit sign-reversal symmetries but not rotational ones, so the only symmetry of \cref{eq: HH} that we make use of is $(x_1,x_2,x_3,x_4)\mapsto(-x_1,x_2,-x_3,x_4)$. The tunable polynomials are therefore optimized within the spaces
\beq
\label{eq: Wd}
\mathcal W_d = \left\{ p\in\R[x,z]_d ~:~ p(x,z)=p(x,-z)=p(\Lambda x,\Lambda z)~~\forall (x,z)\in\R^4\times\R^4 \right\},
\eeq
where $\Lambda$ negates the first and third coordinates. In our computations we specify the same maximum total degree $d$ for all tunable polynomials. Each fixed $d$ gives an SOS program that we solve to obtain an upper bound on the maximal LE of the H\'enon--Heiles system in the region~$\cB$:
\beq
\label{eq: HH SOS}
\mu_{\cB}^* \leq \inf_{V,\sigma_1,\sigma_2,\sigma_3,\rho_0\in\mathcal W_d} \U
\quad \text{s.t.} \quad 
\begin{array}[t]{rl}P-\sum_{i=1}^3\sigma_ig_i -\rho_0(1-|z|^2) \in \Sigma_8~\\
\sigma_i\in\Sigma_8, & i=1,2,3,
\end{array}
\eeq
where $P(x,z)$ is defined as in \cref{eq: P def}.

\Cref{tab: SOS bounds HH} reports the upper bounds \cref{eq: HH SOS} found by numerically solving the SOS program with the maximum degrees of all tunable polynomials fixed to $d=2$, 4, 6, 8, and 10. The upper bounds improve as $d$ is raised, appearing to converge to five digits once polynomials are of degree 8. To confirm the sharpness of this upper bound, we computed a number of unstable periodic orbits of the H\'enon--Heiles system and calculated the leading LE of each using a procedure described below. On the shortest of these periodic orbit we computed a leading LE of $\mu_1\approx0.23081$, confirming that the best upper bounds in \cref{tab: SOS bounds HH} are sharp to at least five digits.

\begin{table}[t]
\caption{Upper bounds on the maximal LE $\mu^*_\cB$ among trajectories of the H\'enon--Heiles system \cref{eq: HH} in the set $\cB$ defined by \cref{eq: Triangle domain HH,eq: HH gi}. Bounds are found by numerically solving the right-hand SOS program in \cref{eq: HH SOS} with the maximum degree $d$ of all tunable polynomials fixed to various values. Tabulated bounds are rounded to the precision shown. To this precision, the best bound is saturated by the periodic orbits shown in \cref{fig: HH PO}.}
\label{tab: SOS bounds HH}
\begin{center}
\begin{tabular}{cc}
Degree of polynomials & Upper bound on maximal LE \\ 
\hline
2  & 0.86999 \\ 
4  & 0.41206 \\ 
6  & 0.26717 \\
8  & 0.23081 \\ 
10 & 0.23081
\end{tabular}
\end{center}
\end{table}

To compute a number of unstable periodic orbits---and thus to find one with a large leading LE---we first numerically integrated the H\'enon--Heiles system from various initial conditions and searched for close returns to the initial conditions. A fourth--order symplectic integrator~\cite{Donnelly2005} was used to conserve $H$. The numerical coefficients for the scheme were as in~\cite[equation 4.8]{Forest1990}. Choosing 121 initial conditions along the curve where $(x_3,x_4)=(0,0)$ and $H=1/7$, we integrated each trajectory for $0\le t\le33$. For every trajectory that satisfied the close return condition $|x(t)-x(0)|<10^{-3}$ at some $t\ge1$, we selected the smallest such time $T$ and corresponding initial condition $x(0)$ as producing an approximate periodic orbit. 

After compiling 19 pairs of initial conditions and periods giving approximate periodic orbits, we used a shooting method to converge each more precisely to a periodic orbit. Precise convergence was needed because errors in an initial condition often produce larger errors in the LE later computed for that orbit. Our shooting method implementation used MATLAB's \texttt{fminsearch} function to minimize $|x(T)-x(0)|$ by tuning the period $T$ and the $x_1$ coordinate of the initial condition. (The initial $x_2$ coordinate was determined from the conditions that $(x_3,x_4)=(0,0)$ and $H=1/7$.) On each iteration of the shooting method, symplectic integration up to time $T$ was carried out using a fixed time step close to $10^{-3}$ but evenly dividing $T$. All 19 orbits were converged such that $|x(T)-x(0)|<10^{-13}$, which required modifying the default options of the \texttt{fminsearch} function to tighten tolerances and allow more iterations. Among the 19 converged orbits there were only 7 different periods (approximately 6.97, 8.07, 15.6, 23.0, 29.3, 29.7, and 30.0) with orbits of the same period being related to each other by symmetries of the H\'{e}non--Heiles system. The leading LEs of all 7 orbits were computed as described below, and the largest LE was found on the orbit with the shortest period, whose numerically converged initial condition and period~are
\beq
x(0) = (0.562878385826716,-0.053847890920149,0,0), \quad T = 6.966517640959103.
\eeq
\Cref{fig: HH PO} shows this periodic orbit, along with the two symmetry-related orbits obtained by rotating both the position vector $(x_1,x_2)$ and the momentum vector $(x_3,x_4)$ by~$\pm2\pi/3$.

\begin{figure}[t]
\begin{center}
\includegraphics[scale=.5, trim={0 0 0 0}, clip]{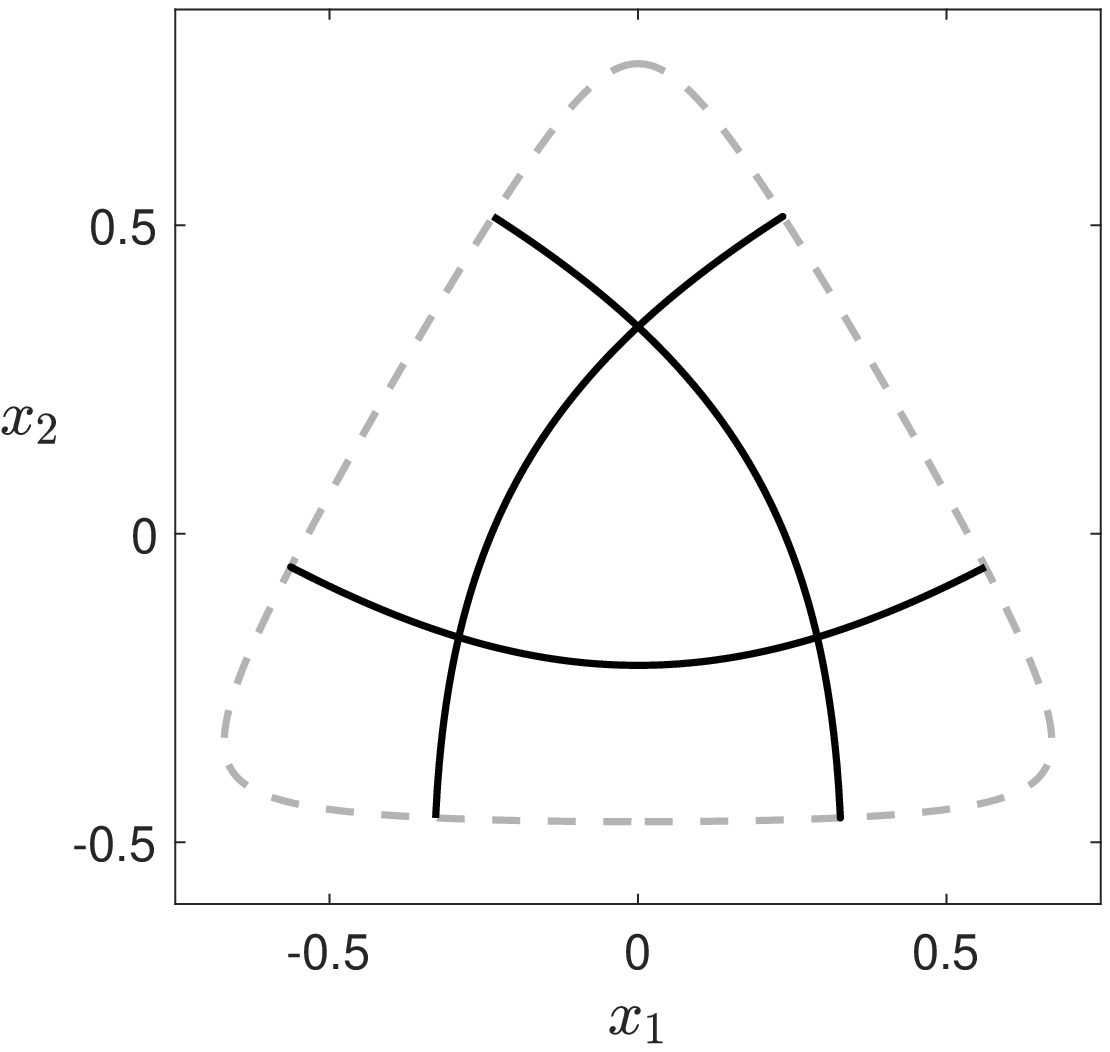}
\hspace{20pt}
\includegraphics[width = 0.38\linewidth]{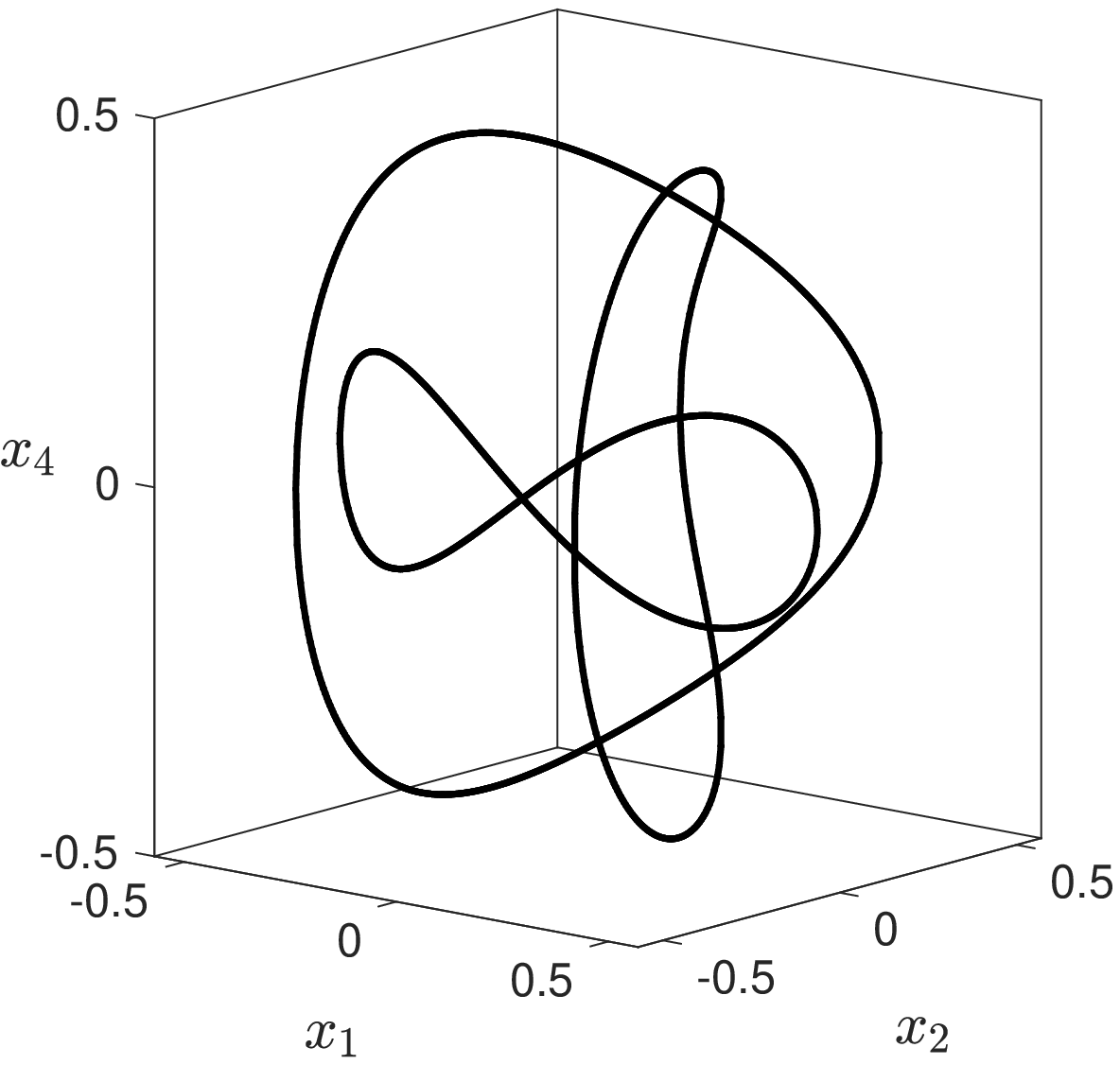}
\end{center}
\caption{Orbits of the H\'{e}non--Heiles system whose leading LE of $\mu_1\approx0.23081$ agrees with our best upper bound on $\mu^*_\cB$ to all five digits reported in \cref{tab: SOS bounds HH}. The three orbits are related by the symmetry in which both position and momentum vectors are rotated by $\pm2\pi/3$ (see text). The left panel shows their projection ($\solidrule$) onto the $(x_1,x_2)$ plane along with that of the $H=1/7$ energy surface (${\color{gray}\dashedrule}$), and the right panel shows their projection onto $(x_1,x_2,x_4)$.}
\label{fig: HH PO}
\end{figure}

To compute the leading LE on each periodic orbit we integrated the ODEs \cref{eq: dynamical system,eq: linearized system} for the orbit $x(t)$ and the unnormalized tangent space vector $y(t)$, using the symplectic integrator for $x$ and the fourth-order Runge--Kutta method for $y$. In order to align $y$ with the leading Lyapunov vector, we began with $y$ of unit length pointing in a random direction and integrated repeatedly around the orbit, normalizing $y$ once per period. After 14 periods the direction of $y$ converged to machine precision. Using this normalized $y$ for the initial condition, we integrated for one more period and calculated the leading LE as $\mu_1=\tfrac{1}{T}\log|y(T)|$. For the shortest-period orbits, which are shown in \cref{fig: HH PO}, our procedure gave a leading LE of $\mu_1\approx0.23081$. This value agrees with our best upper bound in \cref{tab: SOS bounds HH} to all five digits. (We do not compare additional digits because we estimate that the numerically computed infima of the SOS programs are precise to about five digits.) These results strongly suggest that the maximal LE among trajectories in $\cB$ indeed occurs on the orbits shown in \cref{fig: HH PO}.

We emphasize that using numerical integration to find periodic orbits and obtain the precise value $\mu_1\approx0.23081$ on one such orbit was much more difficult than using SOS optimization to compute the upper bound $\mu^*_\cB\le0.23081$. There is inherent numerical difficulty in finding unstable orbits, especially the most unstable orbits that attain the maximal LE, and in converging the orbits to the precision needed to compute their LEs. On the other hand, such considerations do not affect the SOS programs that give upper bounds on the maximal LE. Our MATLAB script implementing these SOS programs with existing software is fewer than 30 lines. For the computation with degree-8 polynomials that gives the nearly exact bound, the runtime was less than 8 minutes on a laptop and could be made faster by fully exploiting the rotational symmetries.

\section{Conclusions}
\label{sec: con}

We have described an approach for using polynomial optimization to compute upper bounds on a polynomial ODE system's maximal LE among all bounded trajectories in a given set---i.e., the leading LE of the most unstable trajectory. For trajectories remaining in a compact domain that is specified as a semialgebraic set subject to a mild technical condition, the upper bounds are guaranteed to converge to the maximal LE as the polynomial degrees in the optimization problem are raised. Because symmetries allow for faster and more accurate polynomial optimization, we have studied how an ODE's symmetries induce symmetries in the tangent dynamics of its Lyapunov vectors and, in turn, in the optimization problems that bound~LEs. 

The polynomial optimization problems that we formulate are SOS programs, and typically they must be solved numerically. If desired one could produce computer-assisted proofs by a similar approach, either by augmenting numerical computations with rational arithmetic~\cite{Peyrl2008} or interval arithmetic~\cite{Jansson2008}, or by using exact algebraic computation~\cite{Henrion2016a, Henrion2019} if its cost is not prohibitive. For polynomials of modest degree, the computations can sometimes guide analytical proofs also~\cite{Goluskin2018a}.

For illustration, we solved SOS programs numerically to find upper bounds on the maximal LE of two chaotic systems: the Lorenz system, which is dissipative, and the Henon--H\'eiles system, which is Hamiltonian. In each case the bounds become sharp as polynomial degrees are raised in the optimization problems, which also raises the computational cost. Bounds for the Lorenz system indicate that the maximal LE is attained on the fixed point at the origin, which has been proved analytically already~\cite{Leonov2016a}. Bounds for the H\'enon--Heiles system indicate that the maximal LE in a certain invariant set is attained on the shortest unstable periodic orbit.

The more typical approach to computing LEs, in contrast to ours, requires computing various approximate trajectories by numerical integration, along with the approximate LEs of these trajectories. Computing a variety of unstable trajectories can be hard on its own, but computing LEs on such orbits presents additional difficulty. On periodic orbits, imprecision in computation of the orbits can be magnified in the LE computation. On chaotic orbits, LEs are infinite-time averages approximated over finite times, but the convergence of these time averages is often very slow and without \emph{a priori} error estimates \cite[Chapter 1.9]{Sprott2010}. Such LE computations, to the extent that they can be trusted, are lower bounds on the maximal LE among all trajectories. Our approach provides upper bounds and thus complements numerical integration.

There are various natural extensions to the present work. For dynamics governed by discrete maps rather than differential equations, there is a direct analogue to the general variational formulation of \cref{prop: LE duality}, but additional steps are needed to arrive at an SOS relaxation similar to \cref{prop: LE duality SOS}. This is described in the doctoral thesis of the first author~\cite{Oeri2023}. Extensions to LEs governed by stochastic ODEs are immediate since the general method for bounding time averages that we have applied here has an analogue for stochastic ODEs~\cite{Fantuzzi2016}. In an example with linear deterministic dynamics subject to multiplicative noise, upper and lower bounds on the stochastic LE that essentially equal one another have been computed successfully~\cite{Kuntz2016}. We expect significantly more difficulty in cases where the deterministic dynamics is nonlinear and has different LEs on different trajectories. This is because computing bounds on stationary expectations for a stochastic system that would be violated by time averages in the corresponding deterministic system requires polynomials of high degree and leads to poor numerical conditioning~\cite{Fantuzzi2016}. Overcoming this obstacle would provide a powerful tool, for instance to find positive lower bounds on LEs of stochastic systems and thereby verify chaotic behavior.

\paragraph{Acknowledgements}
We thank Charles Doering, Giovanni Fantuzzi, Jeremy Parker, Sam Punshon-Smith, and Anthony Quas for some very helpful discussions in the course of this work. The discussions with Charles Doering were enabled by funding for his visit to the University of Victoria from the Pacific Institute for the Mathematical Sciences. Both authors were supported by the Canadian NSERC Discovery Grants Program via award numbers RGPIN-2018-04263, RGPAS-2018-522657, and DGECR-2018-0037.

\appendix

\section*{Appendix}
\label{sec: appendix}

\Cref{prop: sym V,prop: sym SOS} below concern symmetry in the general framework for bounding time averages that the present work has applied to bounding LEs. \Cref{prop: sym V} generalizes \cite[Proposition A.1]{Goluskin2019}, which was proved in the context of finite cyclic symmetry groups, to all compact subgroups of $GL(n)$. \Cref{prop: sym SOS} generalizes \cite[Theorem 2]{Lakshmi2020} in the same way. Compact subgroups of $GL(n)$ either are subgroups of $O(n)$ or are similarity transformations of such a subgroup as in \cref{eq: G similar} \cite[theorem 0.3.5]{Bredon1972}. The significance of \cref{prop: sym V,prop: sym SOS} is that symmetry can be imposed on the function $V$ being optimized in the minimization on the right-hand sides of \cref{eq: weak duality,eq: strong duality}, and on $V$ and the other tunable polynomials in the SOS relaxation on the right-hand sides of \cref{eq: weak duality 3,eq:constrained SOS program}, without changing the infima that give upper bounds on $\ov\Phi$. In the SOS programs, the polynomial expressions that are constrained to be SOS will be symmetric also, which can be exploited for faster and more accurate numerical solutions of SOS programs.

\begin{prop}
\label{prop: sym V}
Let $\cG$ be a compact subgroup of $GL(n)$. Suppose the set $\B \subset \Rm$ is $\cG$-invariant, the function $f:\cB\to\Rm$ is $\cG$-equivariant, and the function $\Phi:\cB\to\R$ is $\cG$-invariant. If there exist $V \in \cC^1(\cB)$ and $\U\in\R$ such that $\U - \Phi(x)-f(x)\cdot\nabla V(x) \geq 0$ for all $x\in\cB$, then there exists $\widehat{V} \in \cC^1(\cB)$ that is $\cG$-invariant and satisfies the same inequality.
\end{prop}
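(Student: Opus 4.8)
The plan is to construct the symmetrized $\widehat{V}$ by averaging $V$ over the group $\cG$ against its Haar measure. Since $\cG$ is a compact subgroup of $GL(n)$, it carries a normalized (bi-invariant) Haar measure $\dm$ with $\int_\cG \dm = 1$. I would define
\beq
\widehat{V}(x) = \int_\cG V(\Lambda x)\,\dm(\Lambda).
\eeq
The first step is to confirm that $\widehat{V}$ is well-defined and inherits the regularity of $V$. Because $\cB$ is $\cG$-invariant, the integrand $V(\Lambda x)$ makes sense for every $x\in\cB$ and $\Lambda\in\cG$; because $V\in\cC^1(\cB)$ and $\cG$ is compact, the integral converges and one can differentiate under the integral sign to get $\widehat{V}\in\cC^1(\cB)$, with $\nabla\widehat{V}(x) = \int_\cG \Lambda^\T \nabla V(\Lambda x)\,\dm(\Lambda)$ by the chain rule.

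\emph{Invariance of $\widehat{V}$} follows from left-invariance of the Haar measure: for any $\Gamma\in\cG$,
\beq
\widehat{V}(\Gamma x) = \int_\cG V(\Lambda\Gamma x)\,\dm(\Lambda) = \int_\cG V(\Lambda x)\,\dm(\Lambda) = \widehat{V}(x),
\eeq
after the substitution $\Lambda\mapsto\Lambda\Gamma^{-1}$, which preserves $\dm$. The substantive step is to verify that $\widehat{V}$ satisfies the same pointwise inequality as $V$. The key identity is that the Lie derivative transforms correctly under the group action: using $\cG$-equivariance of $f$, i.e. $f(\Lambda x)=\Lambda f(x)$, together with $\nabla\widehat{V}(x)=\int_\cG \Lambda^\T\nabla V(\Lambda x)\,\dm(\Lambda)$, one computes
\beq
f(x)\cdot\nabla\widehat{V}(x) = \int_\cG f(x)\cdot\Lambda^\T\nabla V(\Lambda x)\,\dm(\Lambda) = \int_\cG (\Lambda f(x))\cdot\nabla V(\Lambda x)\,\dm(\Lambda) = \int_\cG f(\Lambda x)\cdot\nabla V(\Lambda x)\,\dm(\Lambda).
\eeq
Thus the averaged Lie derivative of $V$ equals the Lie derivative of $\widehat{V}$. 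Combining this with the $\cG$-invariance of $\Phi$ (so that $\Phi(x)=\Phi(\Lambda x)$ inside the integral) gives
\beq
\U - \Phi(x) - f(x)\cdot\nabla\widehat{V}(x) = \int_\cG\bigl[\U - \Phi(\Lambda x) - f(\Lambda x)\cdot\nabla V(\Lambda x)\bigr]\,\dm(\Lambda),
\eeq
where I have used $\U = \int_\cG \U\,\dm(\Lambda)$ since $\int_\cG\dm=1$. The integrand is nonnegative for every $\Lambda$ by the hypothesis applied at the point $\Lambda x\in\cB$, so the integral is nonnegative and $\widehat{V}$ satisfies the required inequality.

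The main obstacle I anticipate is not conceptual but technical: justifying differentiation under the integral sign and the change-of-variables manipulations at the level of rigor the paper expects. For a general compact group one must invoke existence and invariance of the Haar measure and ensure the integrand's joint continuity in $(\Lambda,x)$ (guaranteed by $f,\nabla V$ continuous and the group action continuous) so that dominated convergence applies on the compact parameter space $\cG$. If one prefers to avoid Haar measure entirely, the finite-cyclic argument of \cite[Proposition A.1]{Goluskin2019} can be recovered as the special case where $\dm$ is normalized counting measure on the finite group, and the continuous case is the natural generalization; the similarity-transformation case $\cG_\sfM$ of \cref{eq: G similar} reduces to the orthogonal case after the change of variables $\tilde x=\sfM x$, so no separate argument is needed.
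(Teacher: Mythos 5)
Your proposal is correct and follows essentially the same route as the paper's proof: symmetrize $V$ by averaging against the Haar probability measure of the compact group, establish $\cG$-invariance from the invariance of that measure, and transfer the pointwise inequality by evaluating it at $\Lambda x$, using the equivariance of $f$ and invariance of $\Phi$ together with the chain-rule identity $(\Lambda f(x))\cdot\nabla V(\Lambda x)=f(x)\cdot\nabla_x V(\Lambda x)$, then integrating over $\cG$. Your added remarks on differentiating under the integral sign address a technicality the paper passes over silently, but the argument is the same.
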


\begin{proof}
Because $\cG$ is compact it has a Haar probability measure $m$~\cite[proposition 11.4]{Folland1999} that is invariant in the sense that~\cite[theorem 3.1]{Bredon1972}
\beq
\label{eq: Haar integral}
\int_\cG \iota(\Lambda\Lambda')\,{\rm d} \mL = \int_\cG \iota(\Lambda)\,{\rm d} \mL
\eeq
for all $\Lambda'\in\cG$ and $\iota\in\cC(\cG)$. Define the symmetrization of $V$ as
\begin{align}
\label{eq: V symmetrization}
\widehat{V}(x) = \int_\cG V(\Lambda x ) \; {\rm d} \mL,
\end{align}
and note that $\widehat V$ is $\cG$-invariant by virtue of \cref{eq: Haar integral}. Since $\U - \Phi(x) - f(x)\cdot\nabla V(x) \geq 0$ holds for all $x\in\cB$, and $\cB$ is $\cG$-invariant, the same inequality holds when $x$ is replaced by $\Lambda x$ for any $\Lambda\in\cG$. Evaluating this inequality at $\Lambda x$ and then using the invariance of $\Phi$ and equivariance of $f$ under $\Lambda$ gives
\begin{align}
\label{eq: sym V proof 1}
\U - \Phi( x) - \Lambda f( x)\cdot\nabla V(\Lambda x)  \geq 0 \quad \forall x\in\cB.
\end{align}
The gradient in \cref{eq: sym V proof 1} is with respect to the entire argument $\Lambda x$, but this alternatively can be expressed using a gradient with respect to $x$ since
\beq
\label{eq: sym V proof 2}
\Lambda f(x)\cdot\nabla_{x'} V(x')|_{x'=\Lambda x} = f(x)\cdot\nabla_{x} V(\Lambda x).
\eeq
Substituting the right-hand side of \cref{eq: sym V proof 2} into \cref{eq: sym V proof 1} and integrating over $\cG$ against $m$ gives
\begin{align}
\U - \Phi(x) - f(x)\cdot \nabla \widehat{V}(x) \geq 0  \quad \forall x\in\cB,
\end{align}
which is the desired inequality for the symmetrization $\widehat V$.
\end{proof}

\begin{prop}
\label{prop: sym SOS}
Let $\cG$ be a compact subgroup of $GL(n)$. Suppose all $g_i,h_j\in\Rx$ that define $\cB$ via \cref{eq: semialgebraic set} are $\cG$-invariant, and suppose $\Phi\in\Rx$ and $f\in\R^n[x]$ are $\cG$-invariant and $\cG$-equivariant, respectively. If there exist $V,\sigma_i,\rho_j \in \Rx$ and $\U\in\R$ such that 
\beq
\label{eq: SOS pre-sym} 
B - \Phi - f\cdot \nabla V - \textstyle\sum_{i=1}^I\sigma_ig_i  - \sum_{j=1}^J\rho_jh_j  \in \Sigma_n \quad \text{ and }\quad \sigma_i \in \Sigma_n ~\text{ for }~ i=1,\ldots,I,
\eeq
then there exist $\cG$-invariant $\widehat{V},\widehat{\sigma}_i,\widehat{\rho}_j\in\Rx$ satisfying \cref{eq: SOS pre-sym} in place of $V,\sigma_i,\rho_j$. With these $\widehat{V},\widehat{\sigma}_i,\widehat{\rho}_j\in\Rx$, the expression in the first constraint of \cref{eq: SOS pre-sym} is $\cG$-invariant also.
\end{prop}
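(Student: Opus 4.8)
The plan is to follow the proof of \cref{prop: sym V} verbatim wherever possible and to supply the one genuinely new ingredient: checking that Haar averaging preserves the SOS constraints. Let $m$ be the Haar probability measure on the compact group $\cG$, with its invariance property \cref{eq: Haar integral}. I would symmetrize every tunable polynomial by setting
\beq
\widehat V(x) = \int_\cG V(\Lambda x)\,{\rm d}\mL, \quad \widehat\sigma_i(x) = \int_\cG \sigma_i(\Lambda x)\,{\rm d}\mL, \quad \widehat\rho_j(x) = \int_\cG \rho_j(\Lambda x)\,{\rm d}\mL.
\eeq
Each is $\cG$-invariant by \cref{eq: Haar integral}. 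Because $\Lambda\in GL(n)$ is linear, every $p(\Lambda x)$ is a polynomial in $x$ whose coefficients depend continuously (indeed polynomially) on the entries of $\Lambda$; averaging those coefficients over the compact group yields finite real numbers, so $\widehat V,\widehat\sigma_i,\widehat\rho_j$ are again polynomials, lying in the same bounded-degree spaces as $V,\sigma_i,\rho_j$.

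The main step, absent from \cref{prop: sym V}, is to show the averaging keeps a polynomial SOS. Fix $i$ and write $\sigma_i=\sum_k q_k^2$; then $\sigma_i(\Lambda x)=\sum_k q_k(\Lambda x)^2\in\Sigma_n$ for every $\Lambda$, and since $\Lambda$ is invertible all these polynomials share the fixed degree $\deg\sigma_i$. Thus the family $\{\sigma_i(\Lambda x):\Lambda\in\cG\}$ lies in the intersection of $\Sigma_n$ with the finite-dimensional space of polynomials of degree at most $\deg\sigma_i$, which is a closed convex cone. The symmetrization $\widehat\sigma_i$ is the barycenter of this family against the probability measure $m$, and the barycenter of a probability measure supported on a closed convex set in a finite-dimensional space lies in that set; hence $\widehat\sigma_i\in\Sigma_n$. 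The identical argument applies verbatim to the symmetrized residual below.

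Next I would rerun the computation of \cref{prop: sym V}. Writing $s\in\Sigma_n$ for the residual in the first constraint of \cref{eq: SOS pre-sym}, I evaluate that identity at $\Lambda x$, cancel $\Phi(\Lambda x)=\Phi(x)$, $g_i(\Lambda x)=g_i(x)$ and $h_j(\Lambda x)=h_j(x)$ by invariance, and use equivariance of $f$ together with the chain-rule identity \cref{eq: sym V proof 2} to replace $f(\Lambda x)\cdot(\nabla V)(\Lambda x)$ by $f(x)\cdot\nabla_x V(\Lambda x)$. Integrating over $\cG$ against $m$---and differentiating under the integral, which is valid by compactness of $\cG$ and smoothness of the integrand, to get $\nabla_x\widehat V=\int_\cG\nabla_x V(\Lambda x)\,{\rm d}\mL$---yields
\beq
B-\Phi-f\cdot\nabla\widehat V-\textstyle\sum_i\widehat\sigma_i g_i-\sum_j\widehat\rho_j h_j = \widehat s, \qquad \widehat s(x)=\int_\cG s(\Lambda x)\,{\rm d}\mL\in\Sigma_n,
\eeq
so $\widehat V,\widehat\sigma_i,\widehat\rho_j$ satisfy \cref{eq: SOS pre-sym} in place of $V,\sigma_i,\rho_j$.

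For the closing claim, when $V,\sigma_i,\rho_j$ are already $\cG$-invariant each product $\sigma_i g_i$ and $\rho_j h_j$ is invariant, $\Phi$ and the constant $B$ are invariant, and the Lie derivative $f\cdot\nabla V$ is invariant: differentiating $V(\Lambda x)=V(x)$ gives $(\nabla V)(\Lambda x)=(\Lambda^{-1})^\T\nabla V(x)$, whence $f(\Lambda x)\cdot(\nabla V)(\Lambda x)=\Lambda f(x)\cdot(\Lambda^{-1})^\T\nabla V(x)=f(x)\cdot\nabla V(x)$ using equivariance; so the whole first-constraint expression is $\cG$-invariant. The hard part---and the only step genuinely beyond \cref{prop: sym V}---is the SOS-preservation, which rests essentially on the closedness and convexity of the bounded-degree SOS cone; everything else is bookkeeping inherited from the earlier proof.
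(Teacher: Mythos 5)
Your proof is correct and follows the same overall strategy as the paper's: symmetrize $V,\sigma_i,\rho_j$ against the Haar measure, evaluate the identity at $\Lambda x$ using the invariance of $\Phi$, $g_i$, $h_j$, the equivariance of $f$, and the chain-rule identity \cref{eq: sym V proof 2}, then integrate over $\cG$. The one place you genuinely diverge is the key step of showing that Haar averaging preserves membership in $\Sigma_n$: you invoke closedness and convexity of the cone of SOS polynomials of bounded degree together with a barycenter argument, whereas the paper writes $\sigma_i=b^\T Q_{\sigma_i}b$ with $Q_{\sigma_i}\succeq0$, uses $b(\Lambda x)=\Gamma(\Lambda)b(x)$, and observes that $\widehat Q_{\sigma_i}=\int_\cG\Gamma(\Lambda)^\T Q_{\sigma_i}\Gamma(\Lambda)\,{\rm d}\mL\succeq0$, giving an explicit Gram matrix for $\widehat\sigma_i$ (and likewise for the full residual). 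The two arguments are morally identical---yours is the same barycenter reasoning transplanted from the polynomial cone to the manifestly closed PSD cone---but the paper's version is self-contained, while yours rests on the closedness of the truncated SOS cone, a true but nontrivial standard fact that you should cite or prove if the argument is to stand alone. Your term-by-term verification of the final invariance claim, via $(\nabla V)(\Lambda x)=(\Lambda^{-1})^\T\nabla V(x)$ and the equivariance of $f$, is also correct and is in fact more explicit than the paper's one-line remark that the symmetrized expression is an integral against the Haar measure.
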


\begin{proof}
Define $\widehat{V}$ as the $\cG$-invariant symmetrization of $V$ using the invariant Haar probability measure $m$, as in \cref{eq: V symmetrization}, and define $\widehat{\sigma}_i$ and $\widehat{\rho}_j$ as the analogous symmetrizations of $\sigma_i$ and $\rho_j$. To see that $\sigma_i\in\Sigma_n$ implies $\widehat\sigma_i\in\Sigma_n$, let $b(x)$ be a vector of all monomials with degree up to half the degree of $\sigma_i$. There exists a symmetric matrix $Q_{\sigma_i}$ such that $\sigma_i(x)=b(x)^\T Q_{\sigma_i}b(x)$, and because $\sigma_i\in\Sigma_n$ it is possible to choose $Q_{\sigma_i}\succeq0$. For each $\Lambda \in \cG$, every entry in $b(\Lambda x)$ is a linear combination of the entries of $b(x)$, so there exists a matrix $\Gamma(\Lambda)$ such that $ b(\Lambda x) = \Gamma(\Lambda) b(x)$. Therefore, 
\beq
\label{eq: sigma_j symmetrized}
\widehat{\sigma}_i(x) = \int_\cG \sigma_i(\Lambda x)\, {\rm d}\mL
= b(x)^\T \underbrace{\left[ \int_\cG \Gamma(\Lambda)^\T Q_{\sigma_i}  \Gamma(\Lambda)\, {\rm d} \mL \right]}_{\widehat Q_{\sigma_i}} b(x).
\eeq
This formula makes clear that $\widehat{\sigma}_i$ is a polynomial, and it is also SOS since $\Gamma(\Lambda)^\T Q_{\sigma_i} \Gamma(\Lambda)\succeq0$ for each $\Lambda \in \cG$ implies $\widehat Q_{\sigma_i}\succeq0$. Similar arguments confirm that $\widehat V$ and $\widehat \rho_j$ are polynomials but not that they are SOS since $V$ and $\rho_j$ need not be.

The argument for the first SOS constraint is similar to the argument for $\widehat\sigma_i$. There exist a (possibly different) basis vector $b(x)$ and a matrix $Q\succeq0$ such that
\beq
B - \Phi - f\cdot \nabla V - \textstyle\sum_{i=1}^I\sigma_ig_i - \sum_{j=1}^J\rho_jh_j = b^\T Q\,b
\eeq
since the left-hand expression is assumed to be SOS. The above equality holds for all $x\in\R^n$, including with $x$ replaced by $\Lambda x$. Substituting $\Lambda x$ for $x$, we use the $\Lambda$-invariance of $V,g_i,h_j$ and the $\Lambda$-equivariance of $f$, along with \cref{eq: sym V proof 2}, to find
\begin{multline}
B - \Phi(x) - f(x)\cdot \nabla_xV(\Lambda x) - \textstyle\sum_{i=1}^J\sigma_i(\Lambda x)g_i(x) - \textstyle\sum_{j=1}^I\rho_j(\Lambda x)h_j(x) \\ = b(x)^\T\Gamma(\Lambda)^\T Q\,\Gamma(\Lambda)b(x).
\end{multline}
As in \cref{eq: sigma_j symmetrized}, integrating both sides of the above equality over $\cG$ against $m$ gives
\beq
\label{eq: symmetrized SOS}
B - \Phi - f\cdot \nabla \widehat V - \textstyle\sum_{i=1}^J\widehat\sigma_ig_i - \textstyle\sum_{j=1}^I\widehat\rho_jh_j=
b^\T \underbrace{\left[ \int_\cG \Gamma(\Lambda)^\T Q \Gamma(\Lambda)\; d \mL \right]}_{\widehat Q} b.
\eeq
The integrand defining $\widehat Q$ is positive semidefinite for each $\Lambda$, so $\widehat Q\succeq0$ also, thus the left-hand expression is SOS. This completes the proof that $\widehat{V},\widehat{\sigma}_i,\widehat{\rho}_j$ satisfy \cref{eq: SOS pre-sym}. The left-hand expression in \cref{eq: symmetrized SOS} is also $\cG$-invariant since it can be written as an integral against the Haar probability measure of $\cG$, and this confirms the second part of the claim.
\end{proof}

\bibliography{main.bbl}

\end{document}